\theoremstyle{plain}
\newtheorem{theorem}{Theorem}[section]
\newtheorem{lemma}[theorem]{Lemma}
\newtheorem{proposition}[theorem]{Proposition}
\theoremstyle{definition}
\newtheorem{assumption}{Assumption}
\newcommand{\bignorm}[1]{{\Big|\Big|#1\Big|\Big|}}
\newcommand{\norm}[1]{{||#1||}}
\newcommand{\wtilde}[1]{{\widetilde{#1}}}
\def\supp{\mathop{\mathrm{supp}}\nolimits}
\def\loc{\mathop{\mathrm{loc}}\nolimits}
\def\R{{\mathbb{R}}}
\def\Z{{\mathbb{Z}}}
\def\C{{\mathbb{C}}}
\def\S{{\mathcal{S}}}
\def\H{{\mathcal{H}}}
\def\A{{\mathcal{A}}}
\def\B{{\mathcal{B}}}
\def\X{{\mathcal{X}}}
\def\<{{\langle}}
\def\>{{\rangle}}
\def\ep{{\varepsilon}}
\title
\author{Haruya Mizutani\footnote{Department of Mathematics, Graduate School of Science, Osaka University, Toyonaka, Osaka 560-0043, Japan. E-mail address: \texttt{haruya@math.sci.osaka-u.ac.jp}}
}
\date{\empty}
\begin{document}
\maketitle

\begin{abstract}
The purpose of this note is to prove global-in-time smoothing effects for the Schr\"odinger equation with potentials exhibiting critical singularity. A typical example of  admissible potentials is the inverse-square potential $a|x|^{-2}$ with $a>-(n-2)^2/4$. This particularly gives an affirmative answer to a question raised by \cite{BDDLL}. The proof employs a uniform resolvent estimate proved by \cite{BVZ} and an abstract perturbation method by \cite{BoMi}. 
% which is less than the best constant in Hardy's inequality. %This answers a part of the question raised by a very recent paper \cite{BDDLL}. 
\end{abstract}

%\maketitle

%
\footnotetext{2010 \textit{Mathematics Subject Classification}. Primary 35Q41; Secondary 35B45.}\footnotetext{\textit{Key words and phrases}. smoothing estimate, Strichartz estimate, Schr\"odinger equation, inverse-square potential}

\section{Introduction}
\label{section_Introduction}
This note is concerned with smoothing properties of the time-dependent Schr\"odinger equation
\begin{align}
\label{equation}
i\partial_t u(t,x)=Hu(t,x)+F(t,x);\quad u(0,x)=\psi(x),
\end{align}
with given data $\psi\in L^2(\R^n)$ and $F\in L^1_{\mathrm{loc}}(\R;L^2(\R^n))$, where $H=-\Delta+V(x)$ is a Schr\"odinger operator  on $\R^n$, $n\ge3$, with a real-valued function $V$ which decays at spatial infinity in a suitable sense and  has a critical singularity at the origin. A typical example of potentials we have in mind is the inverse-square potential $V(x)=a|x|^{-2}$ satisfying  $a>-(n-2)^2/4$. 

Let us first recall several known results for the free case, describing the motivation of this paper. It is well-known that the solution $u=e^{it\Delta}\psi$ to the free Schr\"odinger equation
$$
i\partial_t u(t,x)=-\Delta u(t,x);\quad u|_{t=0}=\psi\in L^2(\R^n),
$$
satisfies the following global-in-time smoothing effect
\begin{align}
\label{intro_1}
\norm{\<x\>^{-\rho}|D|^{1/2}e^{it\Delta}\psi}_{L^2(\R^{1+n})}\le C\norm{\psi}_{L^2(\R^n)}, 
\end{align}
where $\<x\>=(1+|x|^2)^{1/2}$, $\rho>1/2$ and $|D|=(-\Delta)^{1/2}$ (see Ben-Artzi and Klainerman \cite{BeKl} for $n\ge3$ and Chihara \cite{Chi} for $n=2$). When $n\ge3$, the estimate of the form
\begin{align}
\label{intro_2}
\norm{w(x)e^{it\Delta}\psi}_{L^2(\R^{1+n})}\le C\norm{w}_{L^{n}(\R^n)}\norm{\psi}_{L^2(\R^n)}
\end{align}
was proved by Kato and Yajima \cite{KaYa}. The estimate \eqref{intro_2} also follows from H\"older's inequality and the endpoint Strichartz estimate proved by Keel and Tao \cite{KeTa}: 
\begin{align}
\label{intro_3}
\norm{e^{it\Delta}\psi}_{L^2(\R;L^{\frac{2n}{n-2}}(\R^n))}\le C\norm{\psi}_{L^2(\R^n)},
\end{align}
which can be also regarded as a smoothing property in $L^p$-spaces. All of these three estimates are fundamental tools in the study of Cauchy problem and scattering theory for both linear and nonlinear Schr\"odinger equations (see \cite{Kat,KaYa,Caz,Tao,RuSu1} and references therein). %For a recent development in the case of more general  dispersive equations with constant coefficients, see \cite{Chi,RuSu2}. 
It is also worth noting that \eqref{intro_1} and \eqref{intro_2} are closely connected with uniform estimates for the resolvent $(-\Delta-z)^{-1}$ with respect to $z\in \C\setminus [0,\infty)$  %given self-adjoint operator $H$ and a densely defined closed operator $A$ on a Hilbert space $\H$, the global-in-time estimate of the form$$\norm{Ae^{-itH}\psi}_{L^2(\R;\H)}\le a\norm{\psi}_{\H},\quad \psi\in \H$$is equivalent to the following uniform resolvent estimate$$\sup_{z\in\C\setminus\R}|\<(R_{H}(z)-R_{H}(\overline z))A^*\psi,A^*\psi\>_{L^2}|\le \frac{a^2}{2}\norm{\psi}_{L^2}^2,\quad \psi\in D(A^*)$$
(see the next section for more details). 

There is a vast literature on extending these estimates \eqref{intro_1}--\eqref{intro_3} to the Schr\"odinger operator $H=-\Delta+V$ with potential $V(x)$.  For the case when $V$ has enough regularity and decays sufficiently fast at spatial infinity, we refer to \cite{RoSc,FaVe, MMT,Moc,Gol} and references therein. There are also several results in the case when $V$ has critical singularity. In particular, the Schr\"odinger operator with the inverse-square potential of the form\
$$
H_a=-\Delta+a|x|^{-2},\quad a>-\frac{(n-2)^2}{4},
$$
has attracted increasing attention since it represents a borderline case  for the validity of \eqref{intro_1}--\eqref{intro_3} (\cite{Duy,GVV}), where note that $(n-2)^2/4$ is the best constant in Hardy's inequality
\begin{align}
\label{Hardy}
\frac{(n-2)^2}{4}\int |x|^{-2}|u|^2dx\le \int|\nabla u|^2dx,\quad u\in C_0^\infty(\R^n). 
\end{align}
We refer to \cite{BPST1,BPST2,BVZ,BoMi} for Kato--Yajima type estimates \eqref{intro_2} and to \cite{BPST1,BPST2,BoMi,Miz1} for Strichartz estimates \eqref{intro_3}. Concerning the estimate \eqref{intro_1}, in a recent paper \cite{BDDLL}, the authors showed, among the others, the following
%theorem
\begin{theorem}	[{\cite[Theorem 1.2]{BDDLL}}]
\label{theorem_BDDLL}
Let $n\ge3$, $a\ge -(n-2)^2/4+1/4$ and $\ep>0$. Then there exists $C_\ep>0$ such that for all $\psi\in L^2(\R^n)$, $e^{-itH_a}\psi$ satisfies
\begin{align}
\label{theorem_BDDLL_1}
\norm{w(|x|)|D|^{1/2}e^{-itH_a}\psi}_{L^2(\R^{1+n})}\le C_\ep\norm{\psi}_{L^2(\R^n)}
\end{align}
where $w(r)=r^{(\ep-1)/2}(1+r^\ep)^{-1}$. 
\end{theorem}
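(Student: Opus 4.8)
The plan is to recast \eqref{theorem_BDDLL_1} as a Kato smoothing estimate and reduce it to a uniform weighted resolvent bound for $H_a$. Since $a>-(n-2)^2/4$, Hardy's inequality \eqref{Hardy} shows that $H_a$ is a nonnegative self-adjoint operator, and by Kato's theory of smooth perturbations (in the form valid for unbounded smoothing operators) an estimate $\norm{Ae^{-itH_a}\psi}_{L^2(\R^{1+n})}\le C\norm{\psi}_{L^2}$ is equivalent to $\sup_{z\in\C\setminus[0,\infty)}\norm{A(H_a-z)^{-1}A^*}_{L^2\to L^2}<\infty$. Moreover $\norm{|D|^s f}_{L^2}$ and $\norm{H_a^{s/2}f}_{L^2}$ are comparable for $0\le s\le1$ (the endpoints $s=0,1$ being trivial and Hardy's inequality, and the intermediate range following by Stein complex interpolation, $H_a^{it/2}$ and $|D|^{it}$ being unitary on $L^2$), so one may factor
\[
w(|x|)\,|D|^{1/2}=\big(\,w(|x|)\,|D|^{1/2}H_a^{-1/4}\,w(|x|)^{-1}\big)\cdot w(|x|)\,H_a^{1/4},
\]
where the first factor is bounded on $L^2(\R^n)$ — it is the identity when $a=0$, and in general differs from the bounded operator $|D|^{1/2}H_a^{-1/4}$ by a commutator of negative order, composed with $w(|x|)^{-1}$, that Hardy's inequality again controls (this uses the known heat-kernel bounds for $H_a$). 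Hence it suffices to establish
\begin{align}
\label{plan_res}
\sup_{z\in\C\setminus[0,\infty)}\bignorm{w(|x|)\,\sqrt{H_a}\,(H_a-z)^{-1}\,w(|x|)}_{L^2(\R^n)\to L^2(\R^n)}<\infty,
\end{align}
since $H_a^{1/4}(H_a-z)^{-1}H_a^{1/4}=\sqrt{H_a}\,(H_a-z)^{-1}$ by the functional calculus.

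To prove \eqref{plan_res} I would decouple the spatial scales at $|x|\sim1$. On $|x|\ge1$ one has $0\le w(|x|)\lesssim\<x\>^{-\rho}$ with $\rho=(1+\ep)/2>1/2$; away from the origin the potential $a|x|^{-2}$ is a short-range perturbation, so after treating the commutators of $H_a$ with smooth cut-offs supported near $|x|=1$ — which do not see the singularity — the contribution of this region is covered by classical uniform resolvent / limiting absorption estimates with weights $\<x\>^{-\rho}$, $\rho>1/2$ (equivalently the $H_a$-analogue of \eqref{intro_1}). The region $|x|\le1$, where $w(|x|)\sim|x|^{(\ep-1)/2}$ is mildly singular, is the substantial part. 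There I would expand in spherical harmonics,
\[
L^2(\R^n)=\bigoplus_{\ell\ge0}L^2\big((0,\infty),r^{n-1}dr\big)\otimes\Y_\ell,\qquad H_a|_{\Y_\ell}=-\partial_r^2-\frac{n-1}{r}\partial_r+\frac{\nu_\ell^2-(n-2)^2/4}{r^2},
\]
with $\nu_\ell=\sqrt{(\ell+\tfrac{n-2}{2})^2+a}$; the hypothesis $a\ge-(n-2)^2/4+1/4$ is precisely $\nu_0\ge1/2$, hence $\nu_\ell\ge\nu_0\ge1/2$ for all $\ell$. As $w$ is radial it respects this decomposition, so $w\sqrt{H_a}(H_a-z)^{-1}w$ is block-diagonal and \eqref{plan_res} reduces, by orthogonality of the sectors, to the corresponding one-dimensional weighted resolvent bounds for the Bessel-type operators, uniformly in $\ell$. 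On each sector the resolvent kernel is explicit in terms of $J_{\nu_\ell}$ and $H^{(1)}_{\nu_\ell}$; near $r=0$ one uses $J_{\nu_\ell}(r)\sim r^{\nu_\ell}$, and the weighted $L^2$ bound on $r\le1$ closes precisely because the weight exponent $(\ep-1)/2$ together with $\nu_\ell\ge1/2$ leaves an $\ep$-margin — the borderline being the critical weight $|x|^{-1/2}$ and $\nu_0=1/2$.

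I expect the main obstacle to be this near-origin analysis: obtaining the one-dimensional weighted resolvent estimates with uniformity both in the spectral parameter $z$ and in the spherical index $\ell$, which needs uniform asymptotics of Bessel functions across the regimes $r|z|^{1/2}\lesssim1$ and $r|z|^{1/2}\gtrsim1$ and for small and large $\ell$, together with the summation over $\ell$. It is also here that the restriction $a\ge-(n-2)^2/4+1/4$ is forced, through the requirement $\nu_0\ge1/2$; treating the full range $a>-(n-2)^2/4$, which is the goal of the present note, calls for a more robust handling of the low angular momenta, and this is where the uniform resolvent estimate of \cite{BVZ} and the abstract perturbation scheme of \cite{BoMi} enter.
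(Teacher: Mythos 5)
Theorem \ref{theorem_BDDLL} is not proved in this note: it is quoted from \cite{BDDLL}, and the present paper's own route to this estimate (and to its strengthening, Theorem \ref{theorem_1}) is entirely different, going through the perturbation scheme of Proposition \ref{proposition_abstract_2}, the $H$-supersmoothness of $|x|^{-1}$ from \cite{BVZ}, and the \emph{free} weighted smoothing estimates of Lemma \ref{lemma_smoothing_2_1}--type. Your skeleton --- trade $w(|x|)|D|^{1/2}$ for $w(|x|)H_a^{1/4}$ by a weighted norm equivalence and then prove a weighted resolvent/smoothing bound for $H_a$ itself via spherical harmonics and Bessel asymptotics --- is essentially the architecture of the original proof in \cite{BDDLL}, so the overall plan is the right one for the theorem as stated.

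The genuine gap is the step where you assert that $w(|x|)\,|D|^{1/2}H_a^{-1/4}\,w(|x|)^{-1}$ is bounded on $L^2$ because it ``differs from $|D|^{1/2}H_a^{-1/4}$ by a commutator of negative order that Hardy's inequality controls.'' That operator bound is precisely the one-sided weighted equivalence $\norm{w(|x|)|D|^{1/2}g}_{L^2}\le C\norm{w(|x|)H_a^{1/4}g}_{L^2}$, i.e.\ \cite[Theorem 1.1]{BDDLL}, and it is the main analytic content of that paper rather than a soft commutator estimate: $|D|^{1/2}H_a^{-1/4}$ is nonlocal, its kernel inherits the singular behaviour $|x|^{-\sigma}$ with $\sigma=(n-2)/2-\nu_0$ of the heat kernel of $H_a$ at the origin, and conjugation by the weight $w^{-1}$ is admissible only when $w^2$ lies in a Muckenhoupt-type class adapted to $\sigma$. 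It is exactly here --- not in the near-origin Bessel analysis of the resolvent, as you suggest --- that the hypothesis $a\ge-(n-2)^2/4+1/4$ (equivalently $\nu_0\ge1/2$) is consumed; indeed the weighted resolvent/smoothing estimate for $w(|x|)H_a^{1/4}$ that you reduce to holds for the full range $a>-(n-2)^2/4$, which is what makes Theorem \ref{theorem_1} possible, so no restriction on $a$ can be ``forced'' at that stage. A secondary issue: on $|x|\ge1$ you invoke classical limiting absorption with weights $\<x\>^{-\rho}$, but a global-in-time estimate needs uniformity of the resolvent bound as $z\to0$ and $|z|\to\infty$; once the scaling invariance is broken by a cut-off at $|x|\sim1$, Agmon-type bounds on compact energy windows do not give this, and one must use the scaling-critical uniform estimates of \cite{BVZ} or \cite{BPST1} there as well.
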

%Here note that $w(r)\sim r^{-1/2+\ep/2}$ if $r\le1$ and $w(r)\sim r^{-1/2-\ep}$ if $r\ge1$. 
The condition $a\ge -(n-2)^2/4+1/4$ was used to ensure that $w$ satisfies some conditions for two-sided weighted norm estimates $$C_1\norm{w(x)(-\Delta)^{s/2}f}_{L^2}\le \norm{w(x)H_a^{s/2}f}_{L^2}\le C_2 \norm{w(x)(-\Delta)^{s/2}f}_{L^2}$$ established by the same paper (see \cite[Theorem 1.1]{BDDLL}). Then authors raised a question whether  \eqref{theorem_BDDLL_1} holds under the condition $a>-(n-2)^2/4$. The main purpose of the present short note is to give an affirmative answer to this question. More precisely, we prove global-in-time smoothing effect of the form \eqref{theorem_BDDLL_1} for Schr\"odinger operators $H=-\Delta+V(x)$ with a large class of real-valued potentials which particularly includes the inverse-square potential with $a>-(n-2)^2/4$. Furthermore, global-in-time smoothing effects for the solution to \eqref{equation} with the inhomogeneous term $F$ are also studied for the same class of potentials. The proofs are based on an abstract perturbation method by our previous work \cite{BoMi} and a uniform estimate proved by \cite{BVZ} for the weighted resolvent $|x|^{-1}(H-z)^{-1}|x|^{-1}$ with respect to $z\in \C\setminus[0,\infty)$. 

In order to state the main results, we introduce some notation. From now on we let $n\ge3$ and impose the following condition:

%assumption
\begin{assumption}
\label{assumption_A}
$V(x)$ a real-valued function on $\R^n$ such that $|x| V \in L^{n,\infty}(\R^n)$ and $x \cdot \nabla V \in L^{n/2,\infty}(\R^n)$. Moreover, there exists $\delta>0$ such that $-\Delta+V\ge-\delta\Delta$ and $-\Delta-V-x\cdot \nabla V\ge-\delta \Delta$ in the sense of forms, that is, for all $u\in C_0^\infty(\R^n)$, 
\begin{align}
\label{assumption_A_1}
\<(-\Delta+V)u,u\>\ge \delta\norm{\nabla u}_{L^2},\quad
\<(-\Delta-V-x\cdot (\nabla V))u,u\>\ge \delta\norm{\nabla u}_{L^2}. 
\end{align}
\end{assumption}
Here $\<f,g\>=\int f\overline g dx$ is the inner product in $L^2(\R^n)$ and $L^{p,q}$ is the Lorentz space (see the end of this section). %, for $1\le q\le p<\infty$, $\mathcal M^{p,q}$ is the Morrey-Campanato space defined through the norm$$\norm{V}_{\mathcal M^{p,q}}:=\sup_{x \in {\mathbb R}^n \atop r > 0}  r^{\frac{n}{p}} \Big( r^{-n} \int_{|y-x|<r} |V(y)|^{q} dy \Big)^{\frac{1}{q}}. $$Note that $\mathcal M^{p,q}$ contains the weak $L^p$-space $L^{p,\infty}$ if $1\le q\le p<\infty$. 
A typical example satisfying Assumption \ref{assumption_A} is the inverse-square potential $V(x)=a|x|^{-2}$ with $a>-(n-2)^2/4$. In this case, it follows from Hardy's inequality \eqref{Hardy} that \eqref{assumption_A_1} is satisfied with $\delta=1-4|a|/(n-2)^2>0$ if $a<0$ or $\delta=1$ if $a\ge0$. Moreover, Assumption \ref{assumption_A} is general enough to include some potentials such that $|x|^2V\notin L^\infty$. For instance, we let $c_1,c_2>0$, $\alpha\in\R^n$ and $\chi\in C^1(\R)$ such that $0\le \chi\le 1$ and $|\chi^{(k)}(t)|\le |t|^{-k-1}$ for $|t|\ge1$. Then 
$$
V(x)=\frac{-(n-2)^2/4+c_1}{|x|^{2}}-\frac{c_2\chi(|x-\alpha|)}{|x-\alpha|}
$$
satisfies Assumption \ref{assumption_A} with $\delta=c_1-c_2(2+\sup|\chi'|)(|\alpha|+1)$ if 
$$
0<c_2<\frac{c_1}{(2+\sup|\chi'|)(|\alpha|+1)}. 
$$

Under Assumption \ref{assumption_A}, Hardy's inequality \eqref{Hardy} implies that the sesquilinear form $$Q_H(u,v)=\<(-\Delta+V)u,v\>%:=\int (\nabla u\cdot\overline{\nabla v}+Vu\overline v)dx
,\quad u,v\in C_0^\infty(\R^n),$$ is symmetric, non-negative and closable such that the domain of its closure $\overline{Q}_H$ satisfies $D(\overline{Q}_H)=\mathcal H^1(\R^n)$. %, where $\mathcal H^s(\R^n)$ is the $L^2$-Sobolev space of order $s$. 
Let  $H$ be the Friedrichs extension of $\overline Q_H$, $e^{-itH}$ the unitary group on $L^2(\R^n)$ generated by $H$ via Stone's theorem and $\Gamma_H$ the inhomogeneous propagator defined by
\begin{align}
\label{inhomogeneous_propagator}
\Gamma_HF(t)=\int_0^t e^{-i(t-s)H}F(s)ds,\quad F\in L^1_{\mathrm{loc}}(\R;L^2(\R^n)). 
\end{align}
Then a unique (mild) solution to the Schr\"odinger equation \eqref{equation} is given by
\begin{align}
\label{solution}
u(t)=e^{-itH}\psi-i\Gamma_HF(t). 
\end{align}
We say that $v(x)$ belongs to the Muckenhoupt $A_2$ class if $v,1/v\in L^1_{\mathrm{loc}}(\R^n)$, $v\ge0$ and
$$
\Big(\frac{1}{|B|}\int_B v(x)dx\Big)\Big(\frac{1}{|B|}\int_{B}\frac{1}{v(x)}dx\Big)\le C
$$
for all ball $B\subset \R^n$ with some constant $C>0$ independent of $B$. 

The main result in this paper then is as follows.

%theorem
\begin{theorem}	\label{theorem_1}
Let $n\ge3$, $V$ satisfy Assumption \ref{assumption_A} and $w\in L^2(\R)$. Suppose $w(|x|)^2\in A_2$ and, for any $j=1,2,...,n$, there exists $C_j>0$ such that
$$
w(|x|)\le C_jw(x_j),\quad x=(x_1,...,x_n)\in \R^n. 
$$
Then there exists $C>0$, independent of $w$, such that $e^{-itH}$ satisfies
%\begin{align}\label{theorem_1_1}\norm{w(|x|)|D|^{1/2}e^{-itH}\psi}_{L^2(\R^{1+n})}&\le C\norm{\psi}_{L^2(\R^n)},\\\label{theorem_1_2}\norm{w(|x|)|D|^{1/2}\Gamma_H|D|^{1/2}w(|x|)F}_{L^2(\R^{1+n})}&\le C\norm{F}_{L^2(\R^{1+n})},\end{align}for all $\psi\in L^2(\R^n)$ and $F\in L^1_{\mathrm{loc}}(\R;D(|D|^{1/2}w(|x|))) \cap L^2(\R^{1+n})$. In particular, if we assume in addition $w(|x|)^{-1}\in L^2_{\loc}(\R^n)$, then $u$ defined by \eqref{solution} satisfies
$$
\norm{w(|x|)|D|^{1/2}e^{-itH}\psi}_{L^{2}(\R^{1+n})}\le C\norm{w}_{L^2(\R)}\norm{\psi}_{L^2(\R^n)},\quad \psi\in L^2(\R^n). 
$$
\end{theorem}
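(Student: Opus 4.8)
The plan is to reduce the global-in-time smoothing estimate to a \emph{uniform weighted resolvent bound} together with the abstract perturbation machinery of \cite{BoMi}. The starting observation is the standard resolvent identity: combining the desired estimate with a $TT^*$/Kato-smoothing argument, one sees that the bound
$$
\norm{w(|x|)|D|^{1/2}e^{-itH}\psi}_{L^2(\R^{1+n})}\lesssim \norm{w}_{L^2(\R)}\norm{\psi}_{L^2(\R^n)}
$$
is equivalent to a uniform bound for the sandwiched resolvent $w(|x|)|D|^{1/2}(H-z)^{-1}|D|^{1/2}w(|x|)$ on $L^2(\R^n)$, uniformly in $z\in\C\setminus[0,\infty)$. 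So the core analytic input is the $|x|^{-1}$-weighted uniform resolvent estimate $\sup_{z\notin[0,\infty)}\norm{|x|^{-1}(H-z)^{-1}|x|^{-1}}_{L^2\to L^2}<\infty$ of \cite{BVZ}, valid precisely under Assumption \ref{assumption_A}.

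Next I would bridge the gap between the $|x|^{-1}$ weight appearing in \cite{BVZ} and the general weight $w(|x|)|D|^{1/2}$ appearing in the theorem. Here the hypotheses on $w$ do the work: since $w(|x|)^2\in A_2$, the operator $w(|x|)|D|^{1/2}$ is controlled by $w(|x|)|D|^{1/2}$-type multipliers in weighted $L^2$, and the assumption $w(|x|)\le C_j w(x_j)$ lets one pass from the radial weight to one-dimensional weights in each coordinate, where classical one-dimensional fractional-integration / Stein--Weiss-type estimates (valid in the $A_2$ setting) apply. Concretely, I expect to show that for the \emph{free} operator one has
$$
\norm{w(|x|)|D|^{1/2}(-\Delta-z)^{-1}|D|^{1/2}w(|x|)}_{L^2\to L^2}\lesssim \norm{w}_{L^2(\R)}^2
$$
uniformly in $z$, by reducing (via the coordinate-weight domination and $A_2$ theory, plus the product structure of the $|D|^{1/2}$ smoothing) to the one-dimensional weighted estimate with weight $w^2$, whose bound is exactly $\norm{w}_{L^2(\R)}^2$. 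This is essentially the mechanism already implicit in the Kato--Yajima estimate \eqref{intro_2} and its Lorentz-space refinements; the role of $w\in L^2(\R)$ with the stated domination is to make the constant depend only on $\norm{w}_{L^2(\R)}$.

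Then I would run the perturbation argument. Writing $H=-\Delta+V$, the resolvent identity gives
$$
(H-z)^{-1}=(-\Delta-z)^{-1}-(-\Delta-z)^{-1}V(H-z)^{-1},
$$
and one factors $V = |x|^{-1}\cdot(|x|V)\cdot|x|^{-1}$ with $|x|V\in L^{n,\infty}$, so that the middle factor is bounded on $L^2$, while the outer $|x|^{-1}$'s are absorbed by the \cite{BVZ} bound on $|x|^{-1}(H-z)^{-1}|x|^{-1}$ and, on the free side, by the free weighted estimate (the $|x|^{-1}$ weight being a special case, or handled directly). Feeding these into the free estimate for $w(|x|)|D|^{1/2}(-\Delta-z)^{-1}$ yields the uniform bound for $w(|x|)|D|^{1/2}(H-z)^{-1}|D|^{1/2}w(|x|)$, and then the abstract theorem of \cite{BoMi} converts this uniform resolvent bound into the time-global smoothing estimate, with the constant tracked as $\norm{w}_{L^2(\R)}$. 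The main obstacle I anticipate is the second step: getting the \emph{sharp constant} $\norm{w}_{L^2(\R)}$ in the free weighted resolvent estimate for the general class of weights $w$ with $w(|x|)^2\in A_2$ and the coordinatewise domination, rather than an unquantified constant. This requires combining the $A_2$ boundedness of the relevant square functions / fractional operators with a careful one-dimensional reduction so that the final constant is genuinely $\norm{w}_{L^2(\R)}$-linear; the interplay of the half-derivative $|D|^{1/2}$ with the radial-to-coordinate weight passage is the delicate point, and is presumably where the precise hypotheses on $w$ were engineered to fit.
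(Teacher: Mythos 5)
Your overall architecture (a free weighted estimate for $-\Delta$, the Barcel\'o--Vega--Zubeldia uniform bound for $|x|^{-1}(H-z)^{-1}|x|^{-1}$, and a perturbation argument \`a la \cite{BoMi}) is the same as the paper's, but two steps as written contain genuine gaps. First, the heart of the matter --- the free estimate with constant $\norm{w}_{L^2(\R)}$ --- is asserted rather than proved, and the mechanism you suggest (Stein--Weiss/fractional integration in each coordinate) is not the one that works. The paper proves $\norm{w(|x|)|D|^{1/2}e^{it\Delta}\psi}_{L^2(\R^{1+n})}\le C\norm{w}_{L^2(\R)}\norm{\psi}_{L^2}$ by combining the sharp one-dimensional Kenig--Ponce--Vega local smoothing estimate $\sup_{x}\norm{|D_x|^{1/2}e^{it\partial_x^2}f}_{L^2_t}\le C\norm{f}_{L^2}$ with a conical partition of unity writing $|\xi|^{1/2}=\sum_j\wtilde C_j(\xi)|\xi_j|^{1/2}$, the Kurtz--Wheeden weighted Mikhlin theorem (this is exactly where $w(|x|)^2\in A_2$ enters, to bound $\wtilde C_j(D)$ on $L^2(w(|x|)^2dx)$), and then the domination $w(|x|)\le C_jw(x_j)$ together with $w\in L^2(\R)$ to integrate out the $x_j$ variable. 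Without this (or an equivalent) argument the theorem is not proved. Relatedly, your reduction to a uniform bound on the full sandwiched resolvent $w(|x|)|D|^{1/2}(H-z)^{-1}|D|^{1/2}w(|x|)$ overshoots: the homogeneous smoothing estimate is equivalent to $H$-\emph{smoothness}, i.e.\ a uniform bound on $A(R_H(z)-R_H(\bar z))A^*$, not on $AR_H(z)A^*$; the paper deliberately works with the time-dependent Duhamel identity so that only the weaker homogeneous free estimate and certain mixed inhomogeneous estimates are needed, never the supersmoothness of $w(|x|)|D|^{1/2}$.

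Second, the perturbation step contains a concrete error: $|x|^{-1}\cdot(|x|V)\cdot|x|^{-1}=|x|^{-1}V\neq V$, and writing $V=|x|^{-1}W|x|^{-1}$ with $W$ bounded on $L^2$ would require $|x|^2V\in L^\infty$, which Assumption~\ref{assumption_A} does not provide (the paper explicitly includes examples with $|x|^2V\notin L^\infty$; only $|x|V\in L^{n,\infty}$ is assumed, and multiplication by an $L^{n,\infty}$ function is not $L^2$-bounded). The correct splitting is the two-factor one, $V=V_1V_2$ with $V_1=|x|V$ and $V_2=|x|^{-1}$, where $V_1$ is handled not as a bounded multiplier but through its $\Delta$-smoothness (endpoint Strichartz plus H\"older and Sobolev in Lorentz spaces) and, crucially, through the mixed estimate $\norm{V_1\Gamma_0^*G}_{L^2L^2}\le C\norm{V_1}_{L^{n,\infty}}\norm{w}_{L^2(\R)}\norm{w(|x|)^{-1}|D|^{-1/2}G}_{L^2L^2}$, which is the dual of the weighted inhomogeneous free estimate \eqref{lemma_smoothing_2_2}. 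Feeding this, the free homogeneous estimate, and the $H$-supersmoothness of $|x|^{-1}$ into Proposition~\ref{proposition_abstract_2}~(1) closes the argument; your version, as stated, cannot absorb the middle factor and the iteration of the resolvent identity you would then need is precisely what the abstract time-dependent lemma replaces.
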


Assuming $0<\ep<1$ without loss of generality, it is easy to see that $w(r)=r^{(\ep-1)/2}(1+r^\ep)^{-1}$ fulfills the above conditions. Another typical example of $w$ is $\<x\>^{-\rho}$ in which case we have

%theorem
\begin{theorem}	
\label{theorem_2}
Let $n\ge3$, $\rho>1/2$ and $\A,\B\in \{\dot\H^{-1/2,\rho}(\R^n),\ L^{\frac{2n}{n+2},2}(\R^n)\}$.  Suppose $V$ satisfies Assumption \ref{assumption_A}. Then the solution $u$ to \eqref{equation} given by \eqref{solution} satisfies
\begin{align}
\label{theorem_2_1}
\norm{u}_{L^{2}(\R;\B^*)}\le C\norm{\psi}_{L^2(\R^n)}+C\norm{F}_{L^2(\R;\A)}
\end{align}
for all $\psi\in L^2(\R^n)$ and $F\in L^1_{\mathrm{loc}}(\R;L^2(\R^n)) \cap L^2(\R;\A)$. 
\end{theorem}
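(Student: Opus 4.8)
The plan is to deduce Theorem~\ref{theorem_2} from the abstract perturbation machinery of \cite{BoMi} together with the uniform weighted resolvent estimate of \cite{BVZ}, in essentially the same way Theorem~\ref{theorem_1} is obtained but now keeping track of the inhomogeneous term. First I would record the key analytic input: under Assumption~\ref{assumption_A}, the results of \cite{BVZ} give a bound of the form $\sup_{z\in\C\setminus[0,\infty)}\norm{|x|^{-1}(H-z)^{-1}|x|^{-1}}_{L^2\to L^2}<\infty$, or more precisely the corresponding statement for the sharp weights $|x|^{-1}$ (equivalently, in the Lorentz-space formulation, $(H-z)^{-1}:L^{2n/(n+2),2}\to L^{2n/(n-2),2}$ uniformly). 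By the standard Kato--Yajima/Kato smoothing equivalence (the limiting absorption / supremum-of-imaginary-part-of-resolvent criterion), this uniform resolvent bound is equivalent to the homogeneous smoothing estimate $\norm{\A^* e^{-itH}\psi}_{L^2(\R^{1+n})}\lesssim\norm{\psi}_{L^2}$ for the operator $\A^*$ whose symbol is $|x|^{-1}$, and by interpolation/duality also for the weight $\<x\>^{-\rho}|D|^{1/2}$ with $\rho>1/2$; this is precisely the content already extracted in proving Theorem~\ref{theorem_1}, and it covers both admissible choices of $\B$ listed in the statement because $\dot\H^{-1/2,\rho}$ and $L^{2n/(n+2),2}$ are exactly the two endpoint spaces appearing in \cite{BoMi}.

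Next I would invoke the abstract inhomogeneous smoothing principle from \cite{BoMi}: if $A_0$ is a self-adjoint operator, $A$ is $A_0$-smooth (i.e. $\norm{A e^{-itA_0}\psi}_{L^2_t L^2_x}\lesssim\norm{\psi}$) and $B$ is likewise $A_0$-smooth, then the Duhamel operator $F\mapsto A\int_0^t e^{-i(t-s)A_0}B^* F(s)\,ds$ is bounded on $L^2(\R;L^2)$; equivalently, $\norm{A\,\Gamma_H F}_{L^2_t L^2_x}\lesssim\norm{(B^*)^{-1}F}_{L^2_t L^2_x}$ whenever $A$ and $B$ are $H$-smooth. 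Applying this with $A$ corresponding to $\B^*$-smoothing and $B$ corresponding to $\A$-smoothing — both of which are available from the previous paragraph for each of the two allowed function spaces — yields $\norm{\Gamma_H F}_{L^2(\R;\B^*)}\lesssim\norm{F}_{L^2(\R;\A)}$. Combining this with the homogeneous estimate $\norm{e^{-itH}\psi}_{L^2(\R;\B^*)}\lesssim\norm{\psi}_{L^2}$ and the representation \eqref{solution} $u=e^{-itH}\psi-i\Gamma_H F$ gives \eqref{theorem_2_1} by the triangle inequality.

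The main obstacle I anticipate is not the abstract duality step but the verification that the two candidate spaces $\dot\H^{-1/2,\rho}(\R^n)$ and $L^{2n/(n+2),2}(\R^n)$ (and their duals) genuinely fall under the scope of the $H$-smoothness estimates obtainable from the \cite{BVZ} resolvent bound. For the Lorentz-space pair this is the clean Kato--Yajima mechanism: the uniform bound $(H-z)^{-1}:L^{2n/(n+2),2}\to L^{2n/(n-2),2}$ is formally the statement that the operator $\Id$ (viewed between these spaces) is $H$-smooth, and one checks the Lorentz-space duality $(L^{2n/(n+2),2})^*=L^{2n/(n-2),2}$ so that $\B^*=L^{2n/(n-2),2}$ matches. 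For the homogeneous weighted Sobolev pair one must pass from the weight $|x|^{-1}$ to $\<x\>^{-\rho}|D|^{1/2}$: this requires the commutator/comparison estimate bounding $\<x\>^{-\rho}|D|^{1/2}$ against $|x|^{-1}$ on the relevant frequency-localized pieces together with a low-frequency argument using $\rho>1/2$, which is where the hypothesis $w(|x|)^2\in A_2$ and the Muckenhoupt weighted-norm bounds of \cite{BDDLL} (or the corresponding heat-kernel comparison between $H_a^{s/2}$ and $(-\Delta)^{s/2}$) enter. Once that comparison is in place, all four combinations of $(\A,\B)$ reduce to the single abstract estimate, and the proof closes.
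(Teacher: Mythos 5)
Your overall architecture (BVZ resolvent bound for $|x|^{-1}$, then the abstract machinery of \cite{BoMi}, then Duhamel) points in the right direction, but the central step of your second paragraph rests on a false principle. You claim that if $A$ and $B$ are both $H$-smooth then the retarded Duhamel operator $F\mapsto A\int_0^t e^{-i(t-s)H}B^*F(s)\,ds$ is bounded on $L^2(\R;L^2)$. Kato smoothness of $A$ and $B$ only controls the \emph{non-retarded} integral $\int_\R$ (by composing $\norm{Ae^{-itH}\psi}_{L^2_tL^2}\lesssim\norm{\psi}$ with its dual); truncating to $\int_0^t$ is exactly the step where the Christ--Kiselev lemma fails at the $L^2_t\to L^2_t$ endpoint, and it is why the paper introduces the strictly stronger notion of $H$-\emph{supersmoothness} and D'Ancona's theorem (Proposition \ref{proposition_abstract_1}(2)). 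Even that theorem only yields the diagonal case $A\Gamma_H A^*$; the off-diagonal bounds you need (e.g. $\<x\>^{-\rho}|D|^{1/2}\Gamma_H$ acting on $\dot\H^{-1/2,\rho}$ or $L^{2_*,2}$ data) do not follow from separate smoothness, nor is $\<x\>^{-\rho}|D|^{1/2}$ shown to be $H$-supersmooth anywhere — the only supersmoothness available from \cite{BVZ} is that of $|x|^{-1}$. The paper circumvents this by proving all the retarded estimates \emph{for the free evolution} directly (Lemma \ref{lemma_proof_2}, via Kenig--Ponce--Vega, Ionescu--Kenig and Keel--Tao real interpolation) and then transferring them to $H$ through the two Duhamel identities \eqref{proof_proposition_abstract_2_2}--\eqref{proof_proposition_abstract_2_3} of Proposition \ref{proposition_abstract_2}(2), which require only the single diagonal estimate $\norm{V_2\Gamma_H V_2^*\wtilde G}\lesssim\norm{\wtilde G}$ for $V_2=|x|^{-1}$ plus the free cross-estimates with $V_1=|x|V$. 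Your proposal omits Lemma \ref{lemma_proof_2} entirely, and it is the technical core of the inhomogeneous bound.

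A secondary misdirection: in your last paragraph you propose to pass from the weight $|x|^{-1}$ to $\<x\>^{-\rho}|D|^{1/2}$ via the weighted-norm equivalence $\norm{w H_a^{s/2}f}\sim\norm{w(-\Delta)^{s/2}f}$ of \cite{BDDLL}. That equivalence is precisely what forces the restriction $a\ge-(n-2)^2/4+1/4$ that this paper is written to remove; the paper never compares $H^{1/4}$ with $(-\Delta)^{1/4}$, but instead keeps $|D|^{1/2}=(-\Delta)^{1/4}$ throughout and lets the perturbative identities absorb the potential through the pair $(V_1,V_2)$.
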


Here %, for $|s|<n/2$ and $\mu\in \R$, 
$\dot\H^{s,\mu}(\R^n)$ denotes the weighted homogeneous Sobolev space equipped with the norm $\norm{f}_{\dot\H^{s,\mu}}=\norm{\<x\>^{\mu}|D|^{s}f}_{L^2}$. %Note that $(\dot\H^{s,\mu}(\R^n))^*=\dot\H^{-s,-\mu}(\R^n)$. 
Note that $(L^{\frac{2n}{n+2},2}(\R^n))^*=L^{\frac{2n}{n-2},2}(\R^n)$ and $(\dot\H^{-1/2,\rho}(\R^n))^*=\dot\H^{1/2,-\rho}(\R^n)$. If $\A=\B=L^{\frac{2n}{n+2},2}(\R^n)$, \eqref{theorem_2_1} becomes the endpoint Strichartz estimate and was proved by our previous work \cite{BoMi}. If $\A=\B=\dot\H^{-1/2,\rho}(\R^n)$, \eqref{theorem_2_1} is a generalization of \eqref{intro_1} and seems to be new under Assumption \ref{assumption_A}. Here we stress that $\A$ and $\B$ do not have to coincide. 
\\\\
\noindent{\it Notation}. Throughout the paper we use the following notation. %$A\lesssim B$ (resp. $A\gtrsim B$) means $A\le cB$ (resp. $A\ge cB$) with some universal constant $c>0$. 
For $T>0$ and a Banach space $\X$, we denote $\norm{F}_{L^p_T\X}=\norm{F}_{L^p([-T,T];\X)}$. $L^{p,q}(\R^n)$ denotes the Lorentz space equipped with the norm $\norm{\cdot}_{L^{p,q}(\R^n)}$ satisfying $\norm{f}_{L^{p,q}(\R^n)}\sim\norm{t d_f(t)^{1/p}}_{L^q(\R_+,t^{-1}dt)}$, 
%$$C_1\norm{t d_f(t)^{1/p}}_{L^q(\R_+,t^{-1}dt)}\le \norm{f}_{L^{p,q}(\R^n)}\le C_2\norm{t d_f(t)^{1/p}}_{L^q(\R_+,t^{-1}dt)},$$
where $d_f(t):=\mu(\{x\in \R^n\ |\ |f(x)|>t\})$ is the distribution function of $f$. % and $\mu$ is the Lebesgue measure. 
We use the convention $L^{\infty,\infty}=L^\infty$. For $1\le p,p_1,p_2<\infty$ and $1\le q,q_1,q_2\le\infty$ satisfying $1/p=1/p_1+1/p_2$, $1/q=1/q_1+1/q_2$, we have H\"older's inequality for Lorentz spaces:
\begin{equation}
\begin{aligned}
\label{Holder}
\norm{fg}_{L^{p,q}}\le C\norm{f}_{L^{p_1,q_1}}\norm{g}_{L^{p_2,q_2}},\quad
\norm{fg}_{L^{p,q}}\le C\norm{f}_{L^{\infty}}\norm{g}_{L^{p,q}}.
\end{aligned}
\end{equation}
%It is not hard to see that $|x|^{-\alpha}\in L^{n/\alpha,\infty}(\R^n)$ for $0<\alpha\le n$. 
When $n\ge3$, we also have Sobolev's inequality in Lorentz spaces:
\begin{align}
\label{Sobolev}
\norm{f}_{L^{\frac{2n}{n-2},2}}\le C \norm{\nabla f}_{L^2},\quad f\in \H^1. 
\end{align}
We refer to  \cite{Gra1} for more details on Lorentz spaces. In what follows we often omit $\R^n$ from $L^p(\R^n)$ and so on, if there is no confusion. \\

The rest of the paper is organized as follows. We first recall in the next section the abstract perturbation method developed in \cite{BoMi}, which plays an important role in the proof of the main theorems. The proof of Theorems \ref{theorem_1} and \ref{theorem_2} is given in Section \ref{section_proof}. \\\\
\noindent{\bf Acknowledgments.} The author would like to express his sincere gratitude to Jean-Marc Bouclet for valuable discussions and for hospitality at the Institut de Math\'ermatiques de Toulouse, Universit\'e Paul Sabatier, where a part of this work has been done. The author also thank the referee for useful suggestions that helped to improve the presentation of this paper. He is partially supported by JSPS Grant-in-Aid for Young Scientists (B) JP25800083 and by Osaka University Research Abroad Program 150S007. 

\section{An abstract perturbation method}
Here we recall the abstract method developed in \cite{BoMi}. We begin with recalling the notion of the (super)smoothness in the sense of Kato \cite{Kat} and Kato-Yajima \cite{KaYa}. Let $\H$ be a Hilbert space with inner product $\<\cdot,\cdot\>$ and norm $\norm{\cdot}$, $H$ a self-adjoint operator on $\H$ and $A$ a densely defined closed operator on $\H$. Note that $A^*$ is also a densely defined closed operator (see \cite[Theorem VIII.1]{ReSi}). Let $R_H(z):=(H-z)^{-1}$, $z\notin\sigma(H)$. Then we say that $A$ is {\it $H$-smooth} with bound $a$ if 
$$
\sup_{z\in \C\setminus\R}|\<(R_H(z)-R_H(\overline z))A^*\psi,A^*\psi\>|\le \frac{a^2}{2}\norm{\psi}^2,\quad \psi\in D(A^*). 
$$
We say that $A$ is {\it $H$-supersmooth} with bound $a$ if 
$$
\sup_{z\in \C\setminus\R}|\<R_H(z)A^*\psi,A^*\psi\>|\le \frac{a}{2}\norm{\psi}^2,\quad \psi\in D(A^*). 
$$
Note that if $A$ is $H$-supersmooth with bound $a$ then $A$ is $H$-smooth with bound $(2a)^{1/2}$. 
The $H$-(super)smoothness is closely connected with smoothing effects.

%proposition
\begin{proposition}
\label{proposition_abstract_1}
{\rm (1)} $A$ is $H$-smooth with bound $a$ if and only if,  for any $\psi\in \H$, $e^{-itH}\psi$ belongs to $D(A)$ for a.e. $t\in \R$ and 
\begin{align}
\label{proposition_abstract_1_1}
\norm{Ae^{-itH}\psi}_{L^2(\R;\H)}%:=\Big(\int_\R \norm{Ae^{-itH}\psi}^2dt\Big)^{1/2}
\le a\norm{\psi}. 
\end{align}
{\rm (2)} Suppose $A$ is $H$-supersmooth with bound $a$. Then, for any simple function $F:\R\to D(A^*)$ and $t\in \R$, $Ae^{-i(t-s)H}A^*F(s)$ is Bochner integrable in $s$ over $[0,t]$ (or $[t,0]$) and satisfies 
\begin{align}
\label{proposition_abstract_1_2}
\bignorm{e^{-|\ep t|}\int_0^t Ae^{-i(t-s)H}A^*F(s)ds}_{L^2(\R;\H)}\le a\norm{e^{-|\ep t|}F}_{L^2(\R;\H)}
\end{align}
for all $\ep\in \R$. Conversely, if the estimate \eqref{proposition_abstract_1_2}  holds for all simple function $F:\R\to D(A^*)$ and $|\ep|<\ep_0$ with some $\ep_0>0$, then $A$ is $H$-supersmooth with bound $a$. 
\end{proposition}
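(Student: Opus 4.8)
\emph{Proof (proposal).}
Both statements are classical: part~(1) goes back to Kato \cite{Kat} and part~(2) to Kato--Yajima \cite{KaYa} (see also \cite{BoMi}); the plan is to recall these arguments in the form needed below. The tool common to both is the Laplace representation of the resolvent,
$$
R_H(z)=i\int_0^\infty e^{itz}e^{-itH}\,dt\quad(\Im z>0),\qquad
R_H(z)=-i\int_0^\infty e^{-itz}e^{itH}\,dt\quad(\Im z<0),
$$
the integrals converging strongly because $\abs{e^{\pm itz}}$ decays. Hence, for $\ep>0$, the function $t\mapsto e^{-\ep\abs t}e^{-itH}\psi$, which belongs to $L^2(\R;\H)$, has time--Fourier transform a unimodular multiple of $\bigl(R_H(-\lambda+i\ep)-R_H(-\lambda-i\ep)\bigr)\psi$, so by Plancherel's theorem damped time averages of $e^{-itH}$ become $\lambda$--integrals of the resolvent along $\{\Im z=\pm\ep\}$; the scalar input is the exact identity $\int_\R\norm{R_H(\lambda\pm i\ep)\psi}^2\,d\lambda=\pi\ep^{-1}\norm\psi^2$ (the spectral theorem and $\int_\R\frac{d\lambda}{(t-\lambda)^2+\ep^2}=\pi\ep^{-1}$).

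For part~(1) I would first dispose of the domain question. The resolvent identity $R_H(z)-R_H(\overline z)=(z-\overline z)R_H(z)R_H(\overline z)$ gives $\norm{R_H(\overline z)A^*\psi}^2=(2\abs{\Im z})^{-1}\abs{\<(R_H(z)-R_H(\overline z))A^*\psi,A^*\psi\>}$, so the $H$--smoothness hypothesis bounds $\norm{R_H(\overline z)A^*\psi}$ uniformly over $\norm\psi\le1$; the uniform boundedness principle then makes $R_H(\overline z)A^*$ bounded, and passing to adjoints (using $A^{**}=A$ and boundedness of $R_H(z)$) shows $\Ran R_H(z)\subseteq D(A)$ with $\norm{AR_H(z)}\lesssim\abs{\Im z}^{-1/2}$; in particular $e^{-itH}\psi\in D(A)$ for a.e.\ $t$ and $t\mapsto Ae^{-itH}\psi$ is locally $L^2$. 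Inserting $A$ into the Plancherel identity, estimating $\norm{A(R_H(z)-R_H(\overline z))\psi}\le\norm{AR_H(z)}\,\abs{z-\overline z}\,\norm{R_H(\overline z)\psi}$, and integrating against the identity for $\int_\R\norm{R_H(\lambda-i\ep)\psi}^2\,d\lambda$, one bounds $\int_\R e^{-2\ep\abs t}\norm{Ae^{-itH}\psi}^2\,dt$ by $a^2\norm\psi^2$ uniformly in $\ep$; letting $\ep\downarrow0$ (monotone convergence) gives \eqref{proposition_abstract_1_1}. The converse is the same chain reversed: from the Laplace formula for $R_H(\overline z)A^*\phi$ and \eqref{proposition_abstract_1_1}, a Cauchy--Schwarz estimate in $t$ yields the pointwise bound $\norm{R_H(\overline z)A^*\phi}\lesssim\abs{\Im z}^{-1/2}\norm\phi$, which is the defining bound of $H$--smoothness; matching the numerical constants is a routine but careful computation (cf.\ \cite{Kat}).

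For part~(2) I would transfer the exponential weight into the kernel: since $\abs t-\abs s=\abs{t-s}$ whenever $s$ lies between $0$ and $t$, with $\Phi:=e^{-\ep\abs\cdot}F$ one has
$$
e^{-\ep\abs t}\int_0^t Ae^{-i(t-s)H}A^*F(s)\,ds=\int_0^t e^{-\ep\abs{t-s}}Ae^{-i(t-s)H}A^*\Phi(s)\,ds=:\Psi(t).
$$
Splitting $\Phi=\Phi_++\Phi_-$ with $\Phi_\pm$ supported on $\pm(0,\infty)$, one checks $\Psi=\mathbf 1_{\{t>0\}}\mathcal R_\ep\Phi_+-\mathbf 1_{\{t<0\}}\mathcal A_\ep\Phi_-$, where $\mathcal R_\ep g(t)=\int_{-\infty}^t e^{-\ep(t-s)}Ae^{-i(t-s)H}A^*g(s)\,ds$ and $\mathcal A_\ep g(t)=\int_t^\infty e^{-\ep(s-t)}Ae^{-i(t-s)H}A^*g(s)\,ds$ are the retarded and advanced Duhamel operators. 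Each is a convolution in $t$ whose operator--valued Fourier multiplier is, up to a unimodular factor, $AR_H(z)A^*$ with $\Im z=\ep$ (resp.\ $\Im z=-\ep$); the $H$--supersmoothness hypothesis bounds $\abs{\<AR_H(z)A^*\psi,\psi\>}=\abs{\<R_H(z)A^*\psi,A^*\psi\>}$ by $\tfrac a2\norm\psi^2$, i.e.\ the numerical radius of $AR_H(z)A^*$ by $a/2$, hence (the operator norm being at most twice the numerical radius) $\norm{AR_H(z)A^*}\le a$, so the operator norms of $\mathcal R_\ep$ and $\mathcal A_\ep$ on $L^2(\R;\H)$ are $\le a$. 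The orthogonal splitting
$$
\norm\Psi_{L^2(\R;\H)}^2\le\norm{\mathcal R_\ep\Phi_+}_{L^2(\R;\H)}^2+\norm{\mathcal A_\ep\Phi_-}_{L^2(\R;\H)}^2\le a^2\bigl(\norm{\Phi_+}^2+\norm{\Phi_-}^2\bigr)=a^2\norm\Phi^2
$$
then gives \eqref{proposition_abstract_1_2} for $\ep\ne0$, and the case $\ep=0$ follows by monotone convergence; the Bochner integrability in $s$ over $[0,t]$ for simple $F$ is immediate once $Ae^{-i\sigma H}A^*$ is known to be bounded (a consequence of $H$--smoothness, as in part~(1)). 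For the converse, feeding into \eqref{proposition_abstract_1_2} test functions $F$ supported on $(0,\infty)$ whose Fourier transforms concentrate near a point isolates $\mathcal R_\ep$ and yields $\norm{AR_H(z)A^*}\lesssim a$ on $\{\Im z=\ep\}$ for $0<\ep<\ep_0$; since $z\mapsto\<R_H(z)A^*\psi,A^*\psi\>$ is holomorphic on each half-plane and $\to0$ as $\abs{\Im z}\to\infty$, the maximum principle extends the bound to all of $\C\setminus\R$, recovering $H$--supersmoothness.

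The step I expect to be the real obstacle is the causal truncation $\int_0^t$ in part~(2). In contrast to the full, untruncated Duhamel operator --- whose boundedness on $L^2(\R;\H)$ is immediate from the multiplier computation --- the causal truncation of an $L^2$--bounded operator need not in general be $L^2$--bounded (this is the endpoint case of the Christ--Kiselev phenomenon), so no generic truncation lemma is available. What rescues the estimate is precisely the algebraic identity above: on the two time half-lines the truncated integral coincides with the globally defined retarded and advanced Duhamel operators applied to $\Phi_\pm$, after which a pure orthogonality argument in $t$ closes the bound with the sharp constant. The remaining points --- the domain questions in part~(1) and the tracking of the numerical constants in both equivalences --- require care but are routine.
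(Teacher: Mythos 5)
The paper does not prove this proposition at all: part (1) is quoted from Kato \cite{Kat} (see also \cite[Theorem XIII.25]{ReSi}) and part (2) from D'Ancona \cite[Theorem 2.3]{Dan}. Your reconstruction follows essentially the same route as those classical proofs --- Laplace transform of the propagator plus Plancherel and the resolvent identity for (1); for (2), transferring the weight via $|t|-|s|=|t-s|$, splitting $F$ by the sign of $s$ so that the causal truncation coincides with the globally defined retarded/advanced operators on the two time half-lines, and then an operator-valued multiplier bound from the numerical-radius hypothesis. You correctly identify that this algebraic splitting is what circumvents the failure of Christ--Kiselev at the $L^2_tL^2_t$ endpoint; that is indeed the heart of D'Ancona's argument.

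Two points need repair. First, your justification of the Bochner integrability in part (2) is wrong: $H$-smoothness of $A$ does \emph{not} imply that $Ae^{-i\sigma H}A^*$ is a bounded operator for each fixed $\sigma$ (it need not even be densely defined pointwise in $\sigma$). What is true, and suffices, is that for each of the finitely many values $f_j\in D(A^*)$ of the simple function $F$, part (1) applied to the vector $A^*f_j\in\H$ gives $e^{-i\sigma H}A^*f_j\in D(A)$ for a.e.\ $\sigma$ with $\sigma\mapsto\norm{Ae^{-i\sigma H}A^*f_j}$ in $L^2(\R)$, hence in $L^1([0,t])$ by Cauchy--Schwarz; this is also the correct way to make the ``multiplier'' computation rigorous (work with the absolutely convergent double integral $\iint_{s<t}e^{-\ep(t-s)}\<e^{-i(t-s)H}A^*F(s),A^*G(t)\>\,ds\,dt$ rather than with an operator-norm-valued convolution kernel). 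Second, your converse in part (2) loses a factor of $2$: testing $\mathcal R_\ep$ on wave packets $\chi(s)\psi$ bounds the \emph{numerical radius} of $AR_H(z)A^*$ by the operator norm of $\mathcal R_\ep$, i.e.\ yields $|\<R_H(z)A^*\psi,A^*\psi\>|\le a\norm{\psi}^2$, which is $H$-supersmoothness with bound $2a$ rather than $a$ under the paper's normalization $\le\frac a2\norm{\psi}^2$. Recovering the stated sharp constant requires more than the norm-to-numerical-radius chain you use (this is where D'Ancona's proof is more delicate); since the paper only ever invokes the forward implication of (2), this does not affect anything downstream, but as a proof of the proposition as stated it is a genuine gap. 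The remaining omissions (the density/limiting argument giving $e^{-itH}\psi\in D(A)$ a.e.\ for general $\psi\in\H$ rather than $\psi\in D(H)$, and the Phragm\'en--Lindel\"of extension from $0<\Im z<\ep_0$ to the full half-plane) are routine as you say.
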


%proof
\begin{proof}
The first statement is due to \cite[Lemma 3.6 and Theorem 5.1]{Kat} (see also \cite[Theorem XIII.25]{ReSi}). The second assertion was proved by \cite[Theorem 2.3]{Dan}. % with  $A\Gamma_HA^*F(t)$ replaced by $$\int_0^t Ae^{-i(t-s)H}A^*F(s)ds.$$It remains to check that $A\Gamma_HA^*F(t)$ and $\int_0^t Ae^{-i(t-s)H}A^*F(s)ds$ coincide. We write $F(s)=\sum_{j=1}^N \mathds1_{E_j}(s)f_j$ with some $f_j\in D(A^*)$. Then,$$\Gamma_HA^*F(t)=\sum_{j=1}^N\int_0^t\mathds1_{E_j}(s)e^{isH}e^{-itH}f_jds.$$By the $H$-smoothness of $A$, $e^{isH}e^{-itH}f_j\in D(A)$ and $Ae^{isH}e^{-itH}f_j\in L^2([0,t];\H)\subset L^1([0,t];\H)$ for each $t$. Therefore, Hille's theorem \cite[Theorem 3.7.12]{HiPh} implies $\Gamma_HA^*F(t)\in D(A)$ and $A\Gamma_HA^*F(t)=\int_0^t Ae^{-i(t-s)H}A^*F(s)ds$  for a.e. $t$. 
\end{proof}

Note that if $A$ is $H$-smooth then $A$ is infinitesimally $H$-bounded (\cite[Theorem XIII.22]{ReSi}). 
Also note that the estimate \eqref{proposition_abstract_1_2} can be replaced by 
\begin{align}
\label{proposition_abstract_1_3}
\bignorm{e^{-|\ep t|}\int_0^t Ae^{-i(t-s)H}A^*F(s)ds}_{L^2_T\H}\le a\norm{e^{-|\ep t|}F}_{L^2_T\H}. 
\end{align}
Indeed, \eqref{proposition_abstract_1_2} implies \eqref{proposition_abstract_1_3} since $s\in [-T,T]$ if $t\in[-T,T]$ and $s\in[0,t]$ (or $s\in [-t,0]$). Conversely, since \eqref{proposition_abstract_1_3} implies
$$
\bignorm{e^{-|\ep t|}\int_0^t Ae^{-i(t-s)H}A^*F(s)ds}_{L^2_T\H}\le a\norm{e^{-|\ep t|}F}_{L^2(\R;\H)}
$$
and $a$ is independent of $T$, one has \eqref{proposition_abstract_1_2} by letting $T\to\infty$. 
Let $\Gamma_H$ be the inhomogeneous propagator defined by the formula \eqref{inhomogeneous_propagator} and set
\begin{align}
\label{adjoint_1}
\Gamma_H^*F(t)=\mathds1_{[0,\infty)}(t)\int _t^Te^{-i(t-s)H}F(s)ds-\mathds1_{(-\infty,0]}(t)\int_{-T}^te^{-i(t-s)H}F(s)ds. 
\end{align}
By a direct calculation, $\<\<\Gamma_HF,G\>\>_T=\<\<F,\Gamma_H^*G\>\>_T$ for $F,G\in L^1_{\loc}\H$, where 
$$
\<\<F,G\>\>_T:=\int_{-T}^T\<F(t),G(t)\>dt. 
$$
Hence $\Gamma_H^*$ is the adjoint of $\Gamma_H$ in $L^2_T\H$. In the abstract theorem below, the operators $A\Gamma_H$ and $A\Gamma_H^*$ for some $H$-smooth operator $A$ play important roles. These operators are a priori well-defined on $L^1_{\loc}(\R;D(H))$ since, for some $z\notin \sigma(H)$ and each $T>0$, 
$$
\norm{A\Gamma_HF}_{L^2_T\H}+\norm{A\Gamma^*_HF}_{L^2_T\H}\le C_T\norm{AR_0(z)}_{\mathbb B(\H)}\norm{R_0(z)F}_{L^1_T\H}<\infty
$$
The next lemma  provides a rigorous definition of $A\Gamma_HF(t)$ for $F\in L^1_{\loc}(\R;\H)$. 

%{lemma}
\begin{lemma}[{\cite[Lemma 4.3 and Lemma 4.5]{BoMi}}]
\label{lemma_abstract_2}
Suppose that $A$ is $H$-smooth with bound $a$. Then $A\Gamma_H$ and $A\Gamma^*_H$ extend to bounded operators from $L^1_T\H$ to $L^2_T\H$ such that
\begin{align}
\label{lemma_abstract_2_1}
\norm{A\Gamma_HF}_{L^2_T\H}\le Ca\norm{F}_{L^1_T\H},\quad
\norm{A\Gamma_H^*F}_{L^2_T\H}\le Ca\norm{F}_{L^1_T\H}
\end{align}
for any $T>0$ and $F\in L^1_T\H$ with some $C>0$ independent of $a,T$ and $F$. Moreover, we have
\begin{align}
\label{lemma_abstract_2_2}
A\Gamma_HF(t)=\int_0^tAe^{-i(t-s)H}F(s)ds
\end{align}
for all simple function $F:[-T,T]\to \H$ and a.e. $t\in[-T,T]$. In particular $A\Gamma_HF(t)$ and $\int_0^tAe^{-i(t-s)H}F(s)ds$ coincide in $L^2_T\H$. 
\end{lemma}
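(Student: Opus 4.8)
The plan is to deduce the boundedness of $A\Gamma_H$ and $A\Gamma_H^*$ from the $L^2$-smoothing estimate \eqref{proposition_abstract_1_1} for $e^{-itH}$, which holds by Proposition \ref{proposition_abstract_1}(1) since $A$ is $H$-smooth with bound $a$. The starting point is the (rigorous) identity \eqref{lemma_abstract_2_2} for simple functions $F:[-T,T]\to\H$; once the two estimates in \eqref{lemma_abstract_2_1} are established on the dense subspace of simple functions, the extension to all of $L^1_T\H$ is automatic, and \eqref{lemma_abstract_2_2} persists in $L^2_T\H$ by continuity. So the heart of the matter is to prove, for simple $F$,
\begin{align}
\label{plan_goal}
\bignorm{\int_0^t Ae^{-i(t-s)H}F(s)\,ds}_{L^2_T\H}\le Ca\norm{F}_{L^1_T\H},
\end{align}
and the analogous bound with $\Gamma_H$ replaced by $\Gamma_H^*$, i.e. for the kernel $\mathds1_{[0,\infty)}(t)\mathds1_{[t,T]}(s)-\mathds1_{(-\infty,0]}(t)\mathds1_{[-T,t]}(s)$ in place of $\mathds1\{0\le s\le t\}-\mathds1\{t\le s\le 0\}$.

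The first step is to fix $s$ and apply $A$-smoothness to the function $e^{-itH}(e^{isH}F(s))=e^{-i(t-s)H}F(s)$: for each fixed $s$, the map $t\mapsto Ae^{-i(t-s)H}F(s)$ lies in $L^2(\R;\H)$ with norm $\le a\norm{F(s)}$. This handles the "diagonal-free" part but not the sharp time cutoff $0\le s\le t$. The standard device — this is the Christ–Kiselev-type / Kato-theory argument used in \cite{BoMi,Dan} — is to write the truncated integral as a bilinear form: for $G\in L^2_T\H$,
\begin{align}
\label{plan_duality}
\Bigl|\Bigl\langle\!\Bigl\langle \int_0^\cdot Ae^{-i(\cdot-s)H}F(s)\,ds,\; G\Bigr\rangle\!\Bigr\rangle_T\Bigr|
=\Bigl|\int\!\!\int_{0\le s\le t\le T} \langle F(s),\, e^{i(t-s)H}A^*G(t)\rangle\,ds\,dt\Bigr|,
\end{align}
(with the obvious sign change on $[-T,0]$), and then to estimate the right-hand side by $\norm{F}_{L^1_T\H}$ times $\sup_s\norm{\int_{s}^{T} e^{i(t-s)H}A^*G(t)\,dt}_\H$, or more robustly by inserting the cutoff $\mathds1\{s\le t\}$ and using that convolution by a truncated unitary group against an $L^2$-in-time, $\H$-valued function is controlled by $A^*$-smoothness. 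Since $A$ is $H$-smooth with bound $a$ iff $A^*$ is $H$-smooth with the same bound (the defining inequality is symmetric in $A\leftrightarrow A^*$ upon replacing $\psi$ by $A^*\psi$ and using $\langle (R_H(z)-R_H(\bar z))\cdot,\cdot\rangle$), one gets $\le Ca\norm{G}_{L^2_T\H}$, and \eqref{plan_goal} follows by duality. The operator $\Gamma_H^*$ is treated identically because its kernel is again a one-sided time truncation of the unitary group, so the same bilinear-form bound applies verbatim.

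I expect the main obstacle to be purely technical: making the Fubini interchange and the duality pairing in \eqref{plan_duality} rigorous for merely $L^1$-in-time data, and in particular justifying that $Ae^{-i(t-s)H}F(s)$ is jointly measurable and Bochner integrable in $s$ over $[0,t]$ for a.e. $t$ — this is exactly why the lemma is first proved for simple $F$, where everything is a finite sum and \eqref{lemma_abstract_2_2} is an elementary identity, and only afterwards extended by density using the a priori bound $\norm{AR_0(z)}_{\mathbb B(\H)}\norm{R_0(z)F}_{L^1_T\H}$ recorded just before the lemma to guarantee that the pre-limit objects are well-defined in $L^2_T\H$. A secondary point to watch is uniformity of the constant $C$ in $T$; this is automatic here because the estimate \eqref{proposition_abstract_1_1} for $e^{-itH}$ is global in time, so truncating to $[-T,T]$ only decreases norms and no $T$-dependence enters. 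Since the cited references \cite[Lemma 4.3, Lemma 4.5]{BoMi} already carry out this argument in detail, the proof here can simply invoke them.
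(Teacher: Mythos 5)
Your argument is sound and reaches the stated conclusion, but by a genuinely different route from the paper. The paper first regularizes with a spectral cutoff $\chi_\ep(H)$ so that $A\chi_\ep(H)\Gamma_H$ is classically defined on all of $L^1_T\H$, proves the untruncated bound $\bignorm{Ae^{-itH}\int_{[0,T]}e^{isH}\chi_\ep(H)F(s)ds}_{L^2_T\H}\le a\norm{F}_{L^1_T\H}$, invokes the Christ--Kiselev lemma to restore the sharp truncation $[0,t]$, and only then lets $\ep\to0$, recovering the a.e.\ identity \eqref{lemma_abstract_2_2} for simple $F$ through an a.e.-convergent subsequence. You instead work directly on simple functions, where \eqref{lemma_abstract_2_2} is elementary (Hille's theorem for the closed operator $A$, once the smoothing estimate guarantees that $s\mapsto Ae^{-i(t-s)H}f_j$ is locally square integrable, hence Bochner integrable on $[0,t]$), and you obtain the truncated bound by duality and H\"older in $s$. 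Your route can in fact be made still more elementary: Minkowski's integral inequality gives $\norm{\int_0^t Ae^{-i(t-s)H}F(s)ds}_{L^2_T\H}\le\int_{-T}^T\norm{Ae^{-i(t-s)H}F(s)}_{L^2(dt;\H)}ds\le a\norm{F}_{L^1_T\H}$ directly, so neither Christ--Kiselev nor duality is actually needed for an $L^1\to L^2$ bound with a one-sided time truncation, and the identical computation handles $\Gamma_H^*$. What the paper's regularization buys is a definition of the extension on all of $L^1_T\H$ in one stroke; your density extension from simple functions is equivalent, but you should note that it agrees with the a priori definition on $L^1_{\loc}(\R;D(H))$ recorded just before the lemma (it does, by the same Hille argument).

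One justification in your write-up is incorrect and should be deleted: it is \emph{not} true that $A$ is $H$-smooth if and only if $A^*$ is $H$-smooth with the same bound, and the defining inequality is not symmetric under $A\leftrightarrow A^*$. Smoothness of $A$ controls the quadratic form of $A(R_H(z)-R_H(\bar z))A^*$ on $D(A^*)$, whereas smoothness of $A^*$ would control that of $A^*(R_H(z)-R_H(\bar z))A$ on $D(A)$; equivalently, $\norm{Ae^{-itH}\psi}_{L^2(\R;\H)}\le a\norm{\psi}$ and $\norm{A^*e^{-itH}\psi}_{L^2(\R;\H)}\le a\norm{\psi}$ are independent statements for non-normal $A$. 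Fortunately the estimate you actually use, $\sup_s\norm{\int_s^T e^{i(t-s)H}A^*G(t)dt}_\H\le a\norm{G}_{L^2_T\H}$, does not require smoothness of $A^*$: it is the dual (adjoint) form of $\norm{Ae^{-itH}\psi}_{L^2(\R;\H)}\le a\norm{\psi}$ applied to $\mathds1_{[s,T]}G$ with $G$ valued in $D(A^*)$. With that justification corrected, your proof goes through.
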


%proof
\begin{proof}
For the sake of self-containedness, we give the proof for $\Gamma_H$ in detail. The proof for $\Gamma_H^*$ is analogous. Let $\chi\in C_0^\infty(\R)$ be such that $\chi\equiv1$ near $0$ and $0\le\chi\le1$ and set $\chi_\ep(t)=\chi(\ep t)$. Note that $A\Gamma_H\chi_\ep(H)=A\chi_\ep(H)\Gamma_H$ is well-defined on $L^1_T\H$ since $A$ is $H$-bounded. Moreover, the $H$-smoothness of $A$ shows$$\bignorm{Ae^{-itH}\int_{[0,T]}e^{isH}\chi_\ep(H)F(s)ds}_{L^2_T\H}\le a\norm{F}_{L^1_T\H}.$$Using the Christ-Kiselev lemma \cite{ChKi}, one can replace $[0,T]$ by $[0,t]$ in the left hand side to obtain$$\norm{A\chi_\ep(H)\Gamma_HF}_{L^2_T\H}\le Ca\norm{F}_{L^1_T\H}$$with some universal constant $C>0$ independent of $F$. This estimate implies that, for all $F\in L^1_T\H$, $A\chi_\ep(H)\Gamma_HF$ converges in $L^2_T\H$ as $\ep\to0$ and the limit denoted by the same symbol $A\Gamma_HF$ satisfies the first estimate in \eqref{lemma_abstract_2_1}. This proves the half part of this lemma. 

Next, let $F:[-T,T]\to \H$ be a simple function. As we have shown, $A\chi_{\ep_n}(H)\Gamma_H$ converges to $A\Gamma_HF$ in $L^2_T\H$ as $n\to\infty$ for any sequence $\ep_n>0$ with $\ep_n\to0$. Then one can find a subsequence $\ep_{n_k}$ and a null set $\mathcal N\subset [-T,T]$ such that $A\chi_{\ep_{n_k}}(H)\Gamma_HF\to A\Gamma_HF(t)$ in $\H$ as $k\to\infty$ for all $t\in [-T,T]\setminus \mathcal N$. Hence, in order to show \eqref{lemma_abstract_2_2}, it suffices to check that
$$
\bignorm{\int_0^tAe^{-i(t-s)H}F(s)ds-A\chi_{\ep_{n_k}}(H)\Gamma_HF(t)}_\H\to0
$$
as $k\to\infty$ for all $t\in [-T,T]\setminus \mathcal N$. Let us write $F=\sum_{j=1}^N\mathds1_{E_j}(t)f_j$ with $f_j\in \H$ and measurable sets $E_j\subset [-T,T]$. Then we have for all $t\in [-T,T]\setminus \mathcal N$,
\begin{align*}
&\bignorm{\int_0^tAe^{-i(t-s)H}F(s)ds-A\chi_{\ep_{n_k}}(H)\Gamma_HF(t)}_\H\\
&\le \sum_{j=1}^N\bignorm{\int_{[0,t]\cap M_j}Ae^{isH}(1-\chi_{\ep_{n_k}}(H))e^{-itH}f_jds}_\H\\
&\le \sum_{j=1}^NT^{1/2}\Big(\int_\R\norm{Ae^{isH}e^{-itH}(1-\chi_{\ep_{n_k}}(H))f_j}_{\H}^2ds\Big)^{1/2}\\
&\le CT^{1/2}\sum_{j=1}^N\norm{(1-\chi_{\ep_{n_k}}(H))f_j}_{\H}\to 0
\end{align*}
as $k\to\infty$, where we have used H\"older's inequality in the third line and the $H$-smoothness of $A$ and the unitarity of $e^{-itH}$ in the last line. This completes the proof. 
\end{proof}

In what follows, $A\Gamma_H$ and $A\Gamma_H^*$ denote such extensions. By Proposition \ref{proposition_abstract_1} (1), \eqref{proposition_abstract_1_3} and this lemma, %we see that 
if $A$ is $H$-supersmooth with bound $a$ then $A\Gamma_HA^*$ is well-defined on $L^1_{\loc}(\R;(D(A^*))$ and satisfies
$
\norm{A\Gamma_HA^*F}_{L^2_T\H}\le a\norm{F}_{L^2_T\H}
$
for all simple function $F:[-T,T]\to D(A^*)$. 

Now we are ready to recall an abstract theorem by \cite{BoMi}. Let $(H_0,H)$ be a pair of self-adjoint operators on $\H$ such that $H=H_0+V_1^*V_2$ in the following sense:
\begin{itemize}
\item $V_1,V_2$ are densely defined closed operators on $\H$ such that we have continuous embeddings $D(H_0)\subset D(V_1)$, $D(H)\subset D(V_1)$ and $D(H)\subset D(V_2)$. 
\item $\<Hf,g\>=\<f,H_0g\>+\<V_2f,V_1g\>$ for $f\in D(H)$ and $g\in D(H_0)$. 
\end{itemize}
Note that, under the above conditions, $V_1^*,V_2^*$ are also densely defined. Recall that a couple of two Banach spaces $(\A,\B)$ is said to be a Banach couple if both $\A$ and $\B$ are algebraically and topologically embedded in a Hausdorff topological vector space $\mathcal C$. 
%proposition
\begin{proposition}[{\cite[Theorem 4.7]{BoMi}}]
\label{proposition_abstract_2}
Let $\A,\B$ be two Banach spaces such that $(\A,\H)$ and $(\B,\H)$ are Banach couples.  
Suppose that $V_1$ is $H_0$-smooth and $V_2$ is both $H_0$-smooth and $H$-smooth. Consider the following series of estimates:  
\begin{align}
\label{proposition_abstract_2_1}
|\<\<e^{-itH_0}\psi,G\>\>_T|&\le s_1\norm{\psi}_{\H}\norm{G}_{L^2_T\B}, \\
\label{proposition_abstract_2_2}
|\<\<\Gamma_{H_0}F,G\>\>_T|&\le s_2\norm{F}_{L^2_T\A}\norm{G}_{L^2_T\B},\\\label{proposition_abstract_2_3}
\norm{V_1\Gamma_{H_0}^*G}_{L^2_T\H}&\le s_3\norm{G}_{L^2_T\B},\\
\label{proposition_abstract_2_4}
\norm{V_1\Gamma_{H_0}F}_{L^2_T\H}&\le s_4\norm{F}_{L^2_T\A},\\
\label{proposition_abstract_2_5}
\norm{V_2\Gamma_{H_0}F}_{L^2_T\H}&\le s_5\norm{F}_{L^2_T\A},\\
\label{proposition_abstract_2_6}
\norm{V_2e^{-itH}\psi}_{L^2_T\H}&\le s_6\norm{\psi}_{\H},\\
\label{proposition_abstract_2_7}
\norm{V_2\Gamma_HV_2^*\wtilde G}_{L^2_T\H}&\le s_7\norm{\wtilde G}_{L^2_T\H}. 
\end{align}
{\rm (1)} Suppose there exist constants $s_1,s_3,s_6>0$ such that \eqref{proposition_abstract_2_1}, \eqref{proposition_abstract_2_3} and \eqref{proposition_abstract_2_6} are satisfied for all $\psi\in \H$ and simple function $G:[-T,T]\to \H\cap \B$:
Then one has
$$
|\<\<e^{-itH}\psi,G\>\>_T|\le (s_1+s_3s_6)\norm{\psi}_{\H}\norm{G}_{L^2_T\B}
$$
for all $\psi\in \H$ and simple function $G:[-T,T]\to \H\cap \B$. \\
{\rm (2)} Suppose that there exist constants $s_2,s_3,s_4,s_5,s_7>0$ such that \eqref{proposition_abstract_2_2}, \eqref{proposition_abstract_2_3}, \eqref{proposition_abstract_2_4}, \eqref{proposition_abstract_2_5} and \eqref{proposition_abstract_2_7} hold for all simple functions $F:[-T,T]\to \H\cap \A$, $G:[-T,T]\to \H\cap \B$ and $\wtilde G:[-T,T]\to D(H)$. Then one has
$$
|\<\<\Gamma_{H}F,G\>\>_T|\le (s_2+s_3s_5+s_3s_4s_7)\norm{F}_{L^2_T\A}\norm{G}_{L^2_T\B}
$$
for all simple functions $F:[-T,T]\to \H\cap \A$ and $G:[-T,T]\to \H\cap \B$. 
\end{proposition}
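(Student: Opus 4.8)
The plan is to read both estimates off the Duhamel (variation-of-constants) formula, using the observation that the perturbation $H-H_0$ is represented \emph{both} by $V_1^*V_2$ and by $V_2^*V_1$ --- the second one extracted from the form identity $\<Hf,g\>=\<f,H_0g\>+\<V_2f,V_1g\>$ --- together with the adjoint relation $\<\<\Gamma_{H_0}F,G\>\>_T=\<\<F,\Gamma_{H_0}^*G\>\>_T$, the Cauchy--Schwarz inequality in $L^2_T\H$, and the hypotheses \eqref{proposition_abstract_2_1}--\eqref{proposition_abstract_2_7}. All computations are carried out for simple functions $F,G$, for which the Duhamel identities are an elementary calculation and the composite operators $V_1\Gamma_{H_0}$, $V_1\Gamma_{H_0}^*$ (Lemma \ref{lemma_abstract_2}) and $V_2\Gamma_HV_2^*$ (the remark following it) are well defined and bounded on the relevant $L^p_T\H$; since the conclusion is again stated for simple functions, no further limiting argument is needed at the very end.

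For part (1), the Duhamel formula, after pairing with $G$ and moving $\Gamma_{H_0}$ onto $\Gamma_{H_0}^*$, gives
\[
\<\<e^{-itH}\psi,G\>\>_T=\<\<e^{-itH_0}\psi,G\>\>_T-i\<\<V_2e^{-itH}\psi,\,V_1\Gamma_{H_0}^*G\>\>_T;
\]
bounding the first term by \eqref{proposition_abstract_2_1} and the second by Cauchy--Schwarz together with \eqref{proposition_abstract_2_6} and \eqref{proposition_abstract_2_3} yields $|\<\<e^{-itH}\psi,G\>\>_T|\le(s_1+s_3s_6)\norm{\psi}_\H\norm{G}_{L^2_T\B}$.

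For part (2), I would iterate Duhamel once, but substituting for the inner $\Gamma_H$ the \emph{right}-hand expansion written with the \emph{second} representation of the perturbation, namely
\[
\Gamma_H=\Gamma_{H_0}-i\Gamma_{H_0}V_1^*V_2\Gamma_H=\Gamma_{H_0}-i\Gamma_{H_0}V_1^*V_2\Gamma_{H_0}-\Gamma_{H_0}V_1^*V_2\,\Gamma_H\,V_2^*V_1\Gamma_{H_0}.
\]
The virtue of this particular arrangement is that the surviving $\Gamma_H$ is now flanked by $V_2$ on the left and $V_2^*$ on the right, which is precisely the combination controlled by \eqref{proposition_abstract_2_7}. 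Pairing with $G$ and transferring $\Gamma_{H_0}$ onto $\Gamma_{H_0}^*$ produces three terms: $\<\<\Gamma_{H_0}F,G\>\>_T$, controlled by \eqref{proposition_abstract_2_2}; $-i\<\<V_2\Gamma_{H_0}F,V_1\Gamma_{H_0}^*G\>\>_T$, bounded by \eqref{proposition_abstract_2_5} and \eqref{proposition_abstract_2_3} by $s_3s_5\norm{F}_{L^2_T\A}\norm{G}_{L^2_T\B}$; and $-\<\<V_2\Gamma_HV_2^*(V_1\Gamma_{H_0}F),V_1\Gamma_{H_0}^*G\>\>_T$, which by Cauchy--Schwarz together with \eqref{proposition_abstract_2_7}, \eqref{proposition_abstract_2_4} and \eqref{proposition_abstract_2_3} is at most $s_7\norm{V_1\Gamma_{H_0}F}_{L^2_T\H}\,s_3\norm{G}_{L^2_T\B}\le s_3s_4s_7\norm{F}_{L^2_T\A}\norm{G}_{L^2_T\B}$. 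Summing gives the constant $s_2+s_3s_5+s_3s_4s_7$.

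The algebra above is short; the real work, and the main obstacle, is rigor. One must justify the Duhamel formula and the iterated identity at the level of simple functions --- checking Bochner integrability of every integrand and, in particular, that the two representations $V_1^*V_2$ and $V_2^*V_1$ of $H-H_0$ may be used interchangeably inside the pairings, which rests on the domain inclusions assumed for $V_1,V_2$ together with a density argument --- then verify that each composite is meaningful (notably that $V_1\Gamma_{H_0}F$ lies in the space on which the bounded extension of $V_2\Gamma_HV_2^*$ acts, again using density of $D(H)$ in $\H$), and finally, if one carries out the intermediate estimates on simple functions valued in convenient dense subspaces, check that they survive the passage to the limit. The Christ--Kiselev lemma underlying the $L^1_T\H\to L^2_T\H$ mapping properties of $V_i\Gamma_{H_0}$ and $V_i\Gamma_{H_0}^*$ is already packaged into Lemma \ref{lemma_abstract_2}, so it does not resurface explicitly here.
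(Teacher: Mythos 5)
Your argument is correct and follows essentially the same route as the paper: part (1) is identical, and in part (2) your second Duhamel iteration, using both representations $V_1^*V_2$ and $V_2^*V_1$ of the perturbation, produces exactly the three terms and the constant $s_2+s_3s_5+s_3s_4s_7$ that the paper obtains. The only difference is organizational: the paper estimates the remainder $\norm{V_2\Gamma_HF}_{L^2_T\H}$ by duality against simple $\widetilde G$ valued in $D(H)$ (which cleanly sidesteps the issue, which you correctly flag, of applying the bounded extension of $V_2\Gamma_HV_2^*$ to $V_1\Gamma_{H_0}F$), rather than substituting the operator identity directly.
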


%proof
\begin{proof}
We give the proof in detail for the sake of self-containedness. Let us first show the first statement. The Duhamel formula implies
\begin{align}
\label{proof_proposition_abstract_2_1}
\<e^{-itH}\psi,\varphi\>=\<e^{-itH_0}\psi,\varphi\>-i\int_0^t\<V_2e^{-irH}\psi,V_1e^{-i(r-t)H_0}\varphi\>dr,\quad \psi,\varphi\in L^2. 
\end{align}
Plugging $\varphi=G(t)$, integrating over $t\in [-T,T]$ and using Fubini's theorem, we learn by the formula \eqref{adjoint_1} of the adjoint $\Gamma_{H_0}^*$ that
$$
\<\<e^{-itH}\psi,G\>\>_T=\<\<e^{-itH_0}\psi,G\>\>_T-i\<\<V_2e^{-itH}\psi,V_1\Gamma_{H_0}^*G\>\>_T. 
$$
Applying \eqref{proposition_abstract_2_1}, \eqref{proposition_abstract_2_3} and \eqref{proposition_abstract_2_6}, we then obtain the first assertion (1). 
%\begin{align*}|\<\<e^{-itH}\psi,G\>\>_T|\le |\<\<e^{-itH_0}\psi,G\>\>_T|+\norm{V_2e^{-itH}}_{L^2_T\H}\norm{V_1\Gamma_{H_0}^*G}_{L^2_T\H}\le (s_1+s_3s_6)\norm{\psi}_{\H}\norm{G}_{L^2_T\H}\end{align*}

In order to prove the  second assertion (2), we replace $t$ by $t-s$ and plug $\psi=F(s),\varphi=G(t)$ and integrate over $s\in [0,t]$ in \eqref{proof_proposition_abstract_2_1} to obtain
\begin{align*}
\<\Gamma_HF(t),G(t)\>
&=\<\Gamma_{H_0}F(t),G(t)\>-i\int_0^t\int_s^t\<V_2e^{-i(\tau-s)H}F(s),V_1e^{-i(\tau-t)H_0}G(t)\>d\tau ds\\
&=\<\Gamma_{H_0}F(t),G(t)\>-i\int_0^t\<V_2\Gamma_HF(\tau),V_1e^{-i(\tau-t)H_0}G(t)\>d\tau.
\end{align*}
As above, integrating in $t\in [-T,T]$ and using  \eqref{adjoint_1} implies
\begin{align}
\label{proof_proposition_abstract_2_2}
\<\<\Gamma_HF,G\>\>_T=\<\<\Gamma_{H_0}F,G\>\>_T-i\<\<V_2\Gamma_HF,V_1\Gamma_{H_0}^*G\>\>_T. 
\end{align}
Exchanging the roles of $H$ and $H_0$, we also obtain
\begin{align}
\label{proof_proposition_abstract_2_3}
\<\<\Gamma_HF,G\>\>_T=\<\<\Gamma_{H_0}F,G\>\>_T-i\<\<V_1\Gamma_{H_0}F,V_2\Gamma_{H}^*G\>\>_T
\end{align}
Now applying  \eqref{proposition_abstract_2_2}, \eqref{proposition_abstract_2_3} to \eqref{proof_proposition_abstract_2_2} implies
\begin{align}
\label{proof_proposition_abstract_2_4}
|\<\<\Gamma_HF,G\>\>_T|\le s_2\norm{F}_{L^2_T\A}\norm{G}_{L^2_T\B}+s_3\norm{V_2\Gamma_HF}_{L^2_T\H}\norm{G}_{L^2_T\B}.
\end{align}
It remains to deal with $\norm{V_2\Gamma_HF}_{L^2_T\H}=\sup_{\norm{\widetilde G}_{L^2_T\H}=1}|\<\<V_2\Gamma_HF,\widetilde G\>\>_T|$. Since $D(H)$ is dense in $\H$, we may assume $\widetilde G(t)\in D(H)$. Then, taking $D(H)\subset D(V_2^*)$ into account, we use \eqref{proof_proposition_abstract_2_3} with $G=V_2^*\widetilde G$, \eqref{proposition_abstract_2_4}, \eqref{proposition_abstract_2_5} and \eqref{proposition_abstract_2_7} to obtain
$
|\<\<V_2\Gamma_HF,\widetilde G\>\>_T|\le (s_5+s_4s_7)\norm{F}_{L^2_T\A}
$
which, together with \eqref{proof_proposition_abstract_2_3}, gives us the second assertion. This completes the proof.
\end{proof}

%proof\begin{proof}[Sketch of the proof] At a formal level, we have the following Duhamel formulas:\begin{align*}e^{-itH}&=e^{-itH_0}-i\Gamma_{H_0}V_1^*V_2e^{-itH},\\\Gamma_H&=\Gamma_{H_0}-i\Gamma_{H_0}V_1^*V_2\Gamma_{H_0}-\Gamma_{H_0}V_1^*V_2\Gamma_H(V_1^*V_2)^*\Gamma_{H_0},\end{align*}which, together with \eqref{proposition_abstract_2_1}--\eqref{proposition_abstract_2_7}, imply the assertion (at least formally). For the rigorous argument, we refer to \cite[Section 4]{BoMi}. \end{proof}

\section{Proof of Theorems \ref{theorem_1} and \ref{theorem_2}}
\label{section_proof}
Let $H=-\Delta+V(x)$ be as in Theorem \ref{theorem_1}. This section is devoted to the proof of the main theorems. In what follows we use a standard notation
$2^*=2n/(n-2)$, 
$2_*=2n/(n+2)$.
 We write $\Gamma_0=\Gamma_{-\Delta}$. 
Let us first recall various estimates for the free Schr\"odinger equation. 
 % $e^{-itH_0}$ and $$\Gamma_{H_0}F(t)=\int_0^t e^{i(t-s)\Delta}F(s)ds. $$

%{lemma}
\begin{lemma}
\label{lemma_proof_1}
There exists $C>0$ such that, for any $v\in L^{n,\infty}(\R^n)$ and $T>0$, 
\begin{align}
\label{lemma_proof_1_1}
\norm{e^{it\Delta}\psi}_{L^2_TL^{2^*,2}}&\le C\norm{\psi}_{L^2},\\
\label{lemma_proof_1_2}
\norm{\Gamma_0 F}_{L^2_TL^{2^*,2}}&\le C \norm{F}_{L^2_TL^{2_*,2}},\\
\label{lemma_proof_1_3}
\norm{v(x)\Gamma_0 F}_{L^2_TL^{2}}&\le C \norm{v}_{L^{n,\infty}}\norm{F}_{L^2_TL^{2_*,2}},\\
\label{lemma_proof_1_4}
\norm{v(x)\Gamma_0 ^*F}_{L^2_TL^{2}}&\le C \norm{v}_{L^{n,\infty}}\norm{F}_{L^2_TL^{2_*,2}}. 
\end{align}
\end{lemma}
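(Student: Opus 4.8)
The plan is to split the four estimates into two groups: \eqref{lemma_proof_1_1} and \eqref{lemma_proof_1_2} are, respectively, the homogeneous and the \emph{double endpoint} inhomogeneous Strichartz estimates for the free propagator, refined to Lorentz spaces in the spatial variable, while \eqref{lemma_proof_1_3} and \eqref{lemma_proof_1_4} will be deduced from \eqref{lemma_proof_1_2} and its $\Gamma_0^*$-analogue by H\"older's inequality in Lorentz spaces.

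For \eqref{lemma_proof_1_1} I would start from the endpoint Strichartz estimate of Keel and Tao \cite{KeTa}: their bilinear estimate combined with real interpolation yields the Lorentz-refined bound $\norm{e^{it\Delta}\psi}_{L^2(\R;L^{2^*,2})}\le C\norm{\psi}_{L^2}$, and restricting the time integral to $[-T,T]$ only decreases the left-hand side, so $C$ is independent of $T$. Dualizing this gives $\norm{\int_\R e^{-is\Delta}F(s)\,ds}_{L^2}\le C\norm{F}_{L^2(\R;L^{2_*,2})}$ and hence the \emph{untruncated} inhomogeneous bound $\norm{e^{it\Delta}\int_\R e^{-is\Delta}F(s)\,ds}_{L^2_TL^{2^*,2}}\le C\norm{F}_{L^2_TL^{2_*,2}}$.

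The main point -- and the place where one must invoke a nontrivial known result -- is \eqref{lemma_proof_1_2}. Since here both time exponents equal $2$, the Christ--Kiselev lemma \cite{ChKi} does \emph{not} apply to pass from the untruncated operator above to the retarded one $\Gamma_0F(t)=\int_0^te^{i(t-s)\Delta}F(s)\,ds$, so \eqref{lemma_proof_1_2} is not a formal consequence of \eqref{lemma_proof_1_1}. Instead I would appeal to the bilinear argument for double-endpoint inhomogeneous Strichartz estimates due to Foschi and Vilela (see also \cite{BoMi}): by duality and $(L^{2^*,2})^*=L^{2_*,2}$, \eqref{lemma_proof_1_2} reduces to $\big|\iint_{-T\le s\le t\le T}\<e^{i(t-s)\Delta}F(s),G(t)\>\,ds\,dt\big|\le C\norm{F}_{L^2_TL^{2_*,2}}\norm{G}_{L^2_TL^{2_*,2}}$, which is proved by combining the Lorentz-refined dispersive estimate $\norm{e^{i\tau\Delta}}_{L^{2_*,2}\to L^{2^*,2}}\le C|\tau|^{-1}$ -- itself obtained by real interpolation between $\norm{e^{i\tau\Delta}}_{L^1\to L^\infty}\le C|\tau|^{-n/2}$ and the $L^2$-isometry bound $\norm{e^{i\tau\Delta}}_{L^2\to L^2}=1$ -- with a real-interpolation/summation argument in the two time variables. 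As this is standard, I would simply cite it rather than reproduce the argument.

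Finally, \eqref{lemma_proof_1_3} and \eqref{lemma_proof_1_4} follow immediately from H\"older's inequality in Lorentz spaces \eqref{Holder}: since $\tfrac1n+\tfrac1{2^*}=\tfrac12$ and $\tfrac1\infty+\tfrac12=\tfrac12$, one has $\norm{v(x)g}_{L^2}=\norm{vg}_{L^{2,2}}\le C\norm{v}_{L^{n,\infty}}\norm{g}_{L^{2^*,2}}$; applying this pointwise in $t$ with $g=\Gamma_0F(t)$ (resp. $g=\Gamma_0^*F(t)$) and integrating over $t\in[-T,T]$ reduces \eqref{lemma_proof_1_3} to \eqref{lemma_proof_1_2}, and \eqref{lemma_proof_1_4} to the analogous bound $\norm{\Gamma_0^*F}_{L^2_TL^{2^*,2}}\le C\norm{F}_{L^2_TL^{2_*,2}}$. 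By its definition \eqref{adjoint_1}, $\Gamma_0^*$ is built from the same propagator $e^{i(t-s)\Delta}$ over the complementary ranges of $s$, and after reversing time it has exactly the structure of $\Gamma_0$, so the same Foschi--Vilela argument gives this last estimate. The only genuine obstacle in the whole lemma is thus \eqref{lemma_proof_1_2}; everything else is a soft consequence of classical Strichartz theory and Lorentz-space H\"older inequalities.
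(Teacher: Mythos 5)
Your proof is correct and follows essentially the same route as the paper: \eqref{lemma_proof_1_1} and \eqref{lemma_proof_1_2} are quoted as the endpoint Strichartz estimates of Keel--Tao (whose Theorem 10.1 already contains the double-endpoint \emph{retarded} estimate with the Lorentz refinement, so the appeal to Foschi--Vilela is an attribution detail rather than a necessity), and \eqref{lemma_proof_1_3}--\eqref{lemma_proof_1_4} follow from \eqref{lemma_proof_1_2}, Lorentz-space H\"older and duality exactly as you describe. Your remark that Christ--Kiselev is unavailable when both time exponents equal $2$, so that \eqref{lemma_proof_1_2} genuinely requires the bilinear/real-interpolation argument rather than being a formal consequence of \eqref{lemma_proof_1_1}, is accurate and is precisely why the citation must be to the retarded endpoint theorem.
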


\begin{proof}
\eqref{lemma_proof_1_1} and \eqref{lemma_proof_1_2} are endpoint Strichartz estimates proved by \cite[Theorem 10.1]{KeTa}. The latter two estimates follow from \eqref{lemma_proof_1_2}, H\"older's inequality \eqref{Holder} and the duality.
\end{proof}

%{lemma}
\begin{lemma}
\label{lemma_proof_2}
Let $w\in L^2(\R)$ be as in Theorem \ref{theorem_1}, $\rho>1/2$ and $v\in L^{n,\infty}(\R^n)$. Then there exists $C>0$, independent of $w,v$ and $T>0$, such that, for all $\psi\in L^2$ and simple function $F:\R\to \S(\R^n)$, one has
\begin{align}
\label{lemma_smoothing_2_1}
\norm{w(|x|)|D|^{1/2} e^{it\Delta}\psi}_{L^2_TL^2}
&\le C\norm{w}_{L^2(\R)}\norm{\psi}_{L^2},\\
\label{lemma_smoothing_2_2}
\norm{w(|x|)|D|^{1/2}\Gamma_0 F}_{L^2_TL^2}
&\le C \norm{w}_{L^2(\R)} \norm{F}_{L^2_TL^{2_*,2}},\\
\label{lemma_smoothing_2_3}
\norm{v(x)\Gamma_0 ^*F}_{L^2_TL^{2}}
&\le C \norm{v}_{L^{n,\infty}}\norm{w}_{L^2(\R)}\norm{w(|x|)^{-1}|D|^{-1/2}F}_{L^2_TL^{2}},\\
%\label{lemma_smoothing_2_4}\norm{\<x\>^{-\rho}|D|^{1/2}e^{it\Delta}\psi}_{L^2_TL^2}&\le C \norm{\psi}_{L^2},\\
\label{lemma_smoothing_2_5}
\norm{\<x\>^{-\rho}|D|^{1/2} \Gamma_0 F}_{L^2_TL^2}
&\le C \norm{\<x\>^\rho |D|^{-1/2}F}_{L^2_TL^2},\\
\label{lemma_smoothing_2_6}
\norm{\<x\>^{-\rho}|D|^{1/2} \Gamma_0 F}_{L^2_TL^2}
&\le C \norm{F}_{L^2_TL^{2_*,2}},\\
\label{lemma_smoothing_2_7}
\norm{v(x)\Gamma_0 ^*F}_{L^2_TL^{2}}
&\le C \norm{v}_{L^{n,\infty}}\norm{\<x\>^{\rho}|D|^{-1/2}F}_{L^2_TL^{2}},\\
\label{lemma_smoothing_2_8}
\norm{v(x)\Gamma_0 F}_{L^2_TL^2}
&\le C \norm{v}_{L^{n,\infty}}\norm{\<x\>^\rho |D|^{-1/2}F}_{L^2_TL^{2}}.
\end{align}
\end{lemma}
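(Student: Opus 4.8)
The estimates in this lemma are all of ``smoothing type'' for the free propagator, and the plan is to deduce each of them from the two basic ingredients already at hand: the global-in-time homogeneous smoothing estimate \eqref{intro_1} (equivalently, the $(-\Delta)$-smoothness of $\<x\>^{-\rho}|D|^{1/2}$) and the endpoint Strichartz estimates of Lemma \ref{lemma_proof_1}, together with the structural hypotheses on $w$. First I would dispose of \eqref{lemma_smoothing_2_5}--\eqref{lemma_smoothing_2_8}, which involve only the polynomial weight $\<x\>^{-\rho}$ with $\rho>1/2$: here one notes that $A:=\<x\>^{-\rho}|D|^{1/2}$ is $(-\Delta)$-smooth by \eqref{intro_1}, so by Proposition \ref{proposition_abstract_1}(1) and Lemma \ref{lemma_abstract_2} the operator $A\Gamma_0$ (and $A\Gamma_0^*$) is bounded $L^1_T L^2\to L^2_T L^2$, and moreover $A\Gamma_0 A^*$ is bounded on $L^2_T L^2$; unravelling $A^* = |D|^{1/2}\<x\>^{-\rho}$ gives \eqref{lemma_smoothing_2_5}, while interpolating/combining with the Strichartz bound \eqref{lemma_proof_1_2} (more precisely using $\norm{F}_{L^{2_*,2}}\gtrsim$ the dual of the $L^{2^*,2}$ norm and the fact that $A\Gamma_0$ maps into $L^2_TL^2$) yields \eqref{lemma_smoothing_2_6}. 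Estimates \eqref{lemma_smoothing_2_7} and \eqref{lemma_smoothing_2_8} are the same statements with $v(x)$ in place of $\<x\>^{-\rho}|D|^{1/2}$: since $v\in L^{n,\infty}$ one has $\norm{v(x)f}_{L^2}\lesssim \norm{v}_{L^{n,\infty}}\norm{f}_{L^{2^*,2}}\lesssim \norm{v}_{L^{n,\infty}}\norm{\nabla f}_{L^2}$ by Hölder \eqref{Holder} and Sobolev \eqref{Sobolev}, so $v(x)$ is ``dominated by'' $|D|$, and one reduces to the $\<x\>^{-\rho}$ case (or directly to Lemma \ref{lemma_proof_1_3}--\ref{lemma_proof_1_4}) by writing $v\Gamma_0 = (v|D|^{-1/2}\<x\>^{\rho})\cdot(\<x\>^{-\rho}|D|^{1/2}\Gamma_0)$ and controlling the first factor in $\mathbb B(L^2)$ via complex interpolation between $L^{n,\infty}$-multiplication bounds.

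The substantive part is \eqref{lemma_smoothing_2_1}--\eqref{lemma_smoothing_2_3}, where the general radial weight $w(|x|)$ appears, and this is where the hypotheses $w\in L^2(\R)$, $w(|x|)^2\in A_2$, and $w(|x|)\le C_j w(x_j)$ enter. The plan is: for \eqref{lemma_smoothing_2_1}, reduce to the one-dimensional half-line case by the pointwise bound $w(|x|)\le C_j w(x_j)$, which lets one replace the radial weight $w(|x|)$ by the ``coordinate'' weight $w(x_j)$ in each of $n$ pieces (after splitting $1 = \sum$ of an appropriate partition, or simply estimating each term $\norm{w(x_j)\,\partial_j^{1/2}\cdots}$ separately and using $|D|^{1/2}\lesssim \sum_j |D_j|^{1/2}$ in an $L^2$ square-function sense); then one is reduced to a \emph{one-dimensional} weighted smoothing estimate $\norm{w(x_1)|D_1|^{1/2} e^{it\partial_1^2}\phi}_{L^2_{t,x_1}}\lesssim \norm{w}_{L^2(\R)}\norm{\phi}_{L^2}$ — or rather its natural $n$-dimensional lift where only one variable is weighted — which follows from the one-dimensional local smoothing estimate (the $A_2$ condition guarantees the Hilbert-transform-type / maximal-function bounds that make the weighted version work, as in the Fefferman–Stein theory) together with the $L^2(\R)$-normalization of $w$ coming from Plancherel in the frequency variable dual to $x_1$. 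For \eqref{lemma_smoothing_2_2} one combines \eqref{lemma_smoothing_2_1}, via the $TT^*$/Christ–Kiselev argument of Lemma \ref{lemma_abstract_2}, with the Strichartz estimate \eqref{lemma_proof_1_2}: schematically, $w(|x|)|D|^{1/2}\Gamma_0$ is bounded $L^2_T L^{2_*,2}\to L^2_T L^2$ because $\Gamma_0$ factors through $e^{it\Delta}$ and both $w(|x|)|D|^{1/2}e^{it\Delta}$ and its adjoint composed with $e^{-is\Delta}$ are controlled — the left factor by \eqref{lemma_smoothing_2_1} and the right (inhomogeneous, dual) factor by the dual Strichartz bound. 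Finally \eqref{lemma_smoothing_2_3} is the dual/companion estimate: $\norm{v(x)\Gamma_0^* F}_{L^2_TL^2}$ is bounded by $\norm{v}_{L^{n,\infty}}$ times the $L^2_T L^2$ norm of $w(|x|)^{-1}|D|^{-1/2}F$ — here one inserts $\mathrm{Id} = (w(|x|)|D|^{1/2})^{-1}(w(|x|)|D|^{1/2})$ appropriately, uses the adjoint of \eqref{lemma_smoothing_2_1} (i.e. $\Gamma_0^*$ maps $w(|x|)^{-1}|D|^{-1/2}$-weighted $L^2$ into $L^2_TL^2$ after composing with $v$, via the same $TT^*$ structure) together with the $L^{n,\infty}\hookrightarrow$ boundedness of $v(x)$ on Sobolev scale as above.

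The main obstacle I anticipate is the weighted one-dimensional smoothing estimate underlying \eqref{lemma_smoothing_2_1}: one must justify carefully that the combination ``$A_2$ on $\R^n$ for $w(|x|)^2$'' plus ``$w\in L^2(\R)$'' plus ``$w(|x|)\le C_j w(x_j)$'' is exactly what is needed to upgrade the scalar estimate \eqref{intro_1} (valid for $\<x\>^{-\rho}$, $\rho>1/2$) to the general weight $w$ — the point being that $w$ need not decay at infinity in the pointwise sense and need not be bounded near $0$, so one cannot simply dominate $w(|x|)$ by a constant times $\<x\>^{-\rho}$. The resolution is to work on the Fourier side in the variable dual to a single coordinate: after a partial Fourier transform the half-wave/Schrödinger flow in one variable becomes essentially multiplication, the $|D_1|^{1/2}$ is absorbed by a change of variables in frequency (the Jacobian producing precisely the gain that turns $\psi\in L^2_x$ into an $L^2$ bound), and the remaining object is a weighted-$L^2$ bound for the extension/restriction operator in one dimension, which is controlled by the $A_2$ property of $w^2$ via a square-function or Stein–Weiss argument; the $\norm{w}_{L^2(\R)}$ factor is what one reads off at the end. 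Everything else is bookkeeping: Hölder and Sobolev in Lorentz spaces \eqref{Holder}--\eqref{Sobolev}, complex interpolation for $L^{p,\infty}$-multipliers, and the abstract smoothing/Christ–Kiselev machinery of Section 2.
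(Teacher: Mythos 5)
Your treatment of the homogeneous estimate \eqref{lemma_smoothing_2_1} is essentially the paper's: reduce to the one-dimensional Kenig--Ponce--Vega smoothing bound $\sup_{x_1}\norm{|D_1|^{1/2}e^{it\partial_{x_1}^2}f}_{L^2_t}\lesssim\norm{f}_{L^2}$ lifted to $L^\infty_{x_j}L^2_TL^2_{\widehat x_j}$, use $w(|x|)\le C_jw(x_j)$ together with $w\in L^2(\R)$ to integrate out $x_j$ (note this last step is H\"older in the physical variable $x_j$, not Plancherel in the dual frequency as you suggest), and use the $A_2$ hypothesis to run the conical multiplier decomposition $|\xi|^{1/2}=\sum_j\wtilde C_j(\xi)|\xi_j|^{1/2}$ boundedly on $L^2(w(|x|)^2dx)$ via the Kurtz--Wheeden weighted Mikhlin theorem. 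That part of your plan is sound.

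The inhomogeneous estimates are where the plan breaks down, and the common culprit is the double $L^2$-in-time endpoint. For \eqref{lemma_smoothing_2_2} you propose to factor $\Gamma_0$ through $e^{it\Delta}$ and combine \eqref{lemma_smoothing_2_1} with dual Strichartz via Christ--Kiselev; but the Christ--Kiselev lemma requires the time exponent of the source space to be strictly smaller than that of the target, and here both equal $2$, so the passage from $\int_{[0,T]}$ to $\int_0^t$ is precisely what cannot be done this way. The paper instead proves the retarded estimate $\norm{|D_j|^{1/2}\Gamma_0 F}_{L^\infty_{x_j}L^2_TL^2_{\widehat x_j}}\lesssim\norm{F}_{L^2_TL^{2_*,2}}$ directly, as a Lorentz-space refinement of Ionescu--Kenig's Lemma 4, using dispersive bounds on the kernel of $I_+^*I_+$, a dyadic decomposition in $|t-s|$, and Keel--Tao's bilinear real interpolation; this is the technical heart of the lemma and is missing from your argument. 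The same endpoint issue invalidates your derivation of \eqref{lemma_smoothing_2_5}: $(-\Delta)$-smoothness of $A=\<x\>^{-\rho}|D|^{1/2}$ gives $A\Gamma_0:L^1_T L^2\to L^2_T L^2$, but boundedness of $A\Gamma_0A^*$ on $L^2_TL^2$ requires $(-\Delta)$-\emph{supersmoothness} of $A$, i.e.\ the uniform weighted resolvent estimate (the paper simply cites Chihara for \eqref{lemma_smoothing_2_5}). Finally, your factorization $v\Gamma_0=(v|D|^{-1/2}\<x\>^{\rho})\cdot(\<x\>^{-\rho}|D|^{1/2}\Gamma_0)$ for \eqref{lemma_smoothing_2_8} cannot work: $v|D|^{-1/2}\<x\>^{\rho}$ is not bounded on $L^2$, since the weight $\<x\>^{\rho}$ grows and $v\in L^{n,\infty}$ only controls a full derivative $|D|^{-1}$ via H\"older--Sobolev, not $|D|^{-1/2}$. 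The paper instead writes $\Gamma_0=-\Gamma_0^*\pm\Gamma_0^{0}+\Gamma_0^{\mp}$ on $\pm t\ge0$ and estimates each piece separately using the homogeneous bounds and \eqref{lemma_smoothing_2_6}.
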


%proof
\begin{proof}
Let us first consider \eqref{lemma_smoothing_2_1}. When $n=1$, it was proved by \cite{KPV}  that
$$
\sup_{x\in \R}\norm{|D_x|^{1/2}e^{it\partial_x^2}f}_{L^2(\R_t)}\le C \norm{f}_{L^2(\R_x)}
$$
which, together with the unitarity of $e^{it\Delta_{\widehat{x}_j}}$ in $L^2(\R^{n-1})$, implies
$$
\norm{|D_j|^{1/2} e^{it\Delta}\psi}_{L^\infty_{x_j}L^2_TL^2_{\widehat x_j}}
\le C\norm{\psi}_{L^2},\quad j=1,2,...,n,
$$
uniformly in $T>0$, where $\widehat x_j=(x_1,...,x_{j-1},x_{j+1},...,x_n)\in \R^{n-1}$ and $D_j=-i\partial_{x_j}$. \eqref{lemma_smoothing_2_1} then is derived from this estimate as follows. Let $\{C_j(\xi)\}$ be a conical partition of unity on $\R^n$ so that $
I=\sum_{j=1}^n C_j(\xi),
$
where $C_j\in C^\infty(\R^n\setminus\{0\})$ such that $\supp C_j\subset\{2|\xi_j|>|\xi|\}$ and $\partial_\xi^\alpha C_j(\xi)=O(|\xi|^{-|\alpha|})$. If we set 
$
\wtilde C_j(\xi)=C_j(\xi)|\xi|^{1/2}|\xi_j|^{-1/2}
$ then $\wtilde C_j$ also satisfies $\partial_\xi^\alpha C_j(\xi)=O(|\xi|^{-|\alpha|})$ and
$
|\xi|^{1/2}=\sum_{j=1}^n\wtilde C_j(\xi)|\xi_j|^{1/2}. 
$
Since $w(|x|)^2$ belongs to the Muckenhoupt $A_2$-class, $\wtilde C_j(D)$ is bounded on a weighted space $L^2(\R^n,w(|x|)^2dx)$ by weighted Mikhlin's multiplier theorem (see \cite{KuWh}). Thus we conclude that
\begin{align*}
\norm{w(|x|)|D|^{1/2}e^{it\Delta}\psi}_{L^2_TL^2}^2
&\le \sum_{j=1}^n\norm{w(|x|)\wtilde C_j(D)|D_{j}|^{1/2}e^{it\Delta}\psi}_{L^2_TL^2}^2\\
&\le C\sum_{j=1}^n\norm{w(|x|)|D_{j}|^{1/2}e^{it\Delta}\psi}_{L^2_TL^2}^2\\
%&\lesssim\sum_{j=1}^n\norm{w(x_j)|D_{j}|^{1/2}e^{it\Delta}\psi}_{L^2_TL^2}^2\\
&\le C\sum_{j=1}^n\norm{w}_{L^2(\R)}^2\norm{|D_{j}|^{1/2}e^{it\Delta}\psi}_{L^\infty_{x_j}L^2_TL^2_{\widehat x_j}}^2\\
&\le C \norm{w}_{L^2(\R)}^2\norm{\psi}_{L^2}^2
\end{align*}
uniformly in $T>0$, where we used the properties $w(|x|)\le C_jw(x_j)$ and $w\in L^2(\R)$ in the third line. Next, by the same argument as above, \eqref{lemma_smoothing_2_2} follows from the following estimate
\begin{align*}
\norm{|D_j|^{1/2}\Gamma_0 F}_{L^\infty_{x_j}L^2_TL^2_{\widehat x_j}}
\le C \norm{F}_{L^2_TL^{2_*,2}}.
\end{align*}
which is a slight generalization of \cite[Lemma 4]{IoKe}, in which the same estimate with $L^{2_*,2}$ replaced by $L^{2_*}$ was proved. Although the proof is essentially same as that of \cite[Lemma 4]{IoKe}, we briefly recall its strategy for reader's convenience. Without loss of generality, we may assume $j=1$. Then it suffices to show 
\begin{align}
\label{lemma_magnetic_1_2}
\sup_{x_1}\norm{|D_j|^{1/2}\wtilde\Gamma_0 F}_{L^2_TL^2_{\widehat x_1}}\le C\norm{F}_{L^2_TL^{2_*,2}},
\end{align}
where $\wtilde \Gamma_0 $ is defined by
$$
\wtilde \Gamma_0 F(t):=\int_{-\infty}^te^{i(t-s)\Delta}F(s)ds.
$$
Indeed, the corresponding estimate for $\Gamma_0 -\wtilde\Gamma_0 $ follows from \eqref{lemma_smoothing_2_1} and the dual estimate of \eqref{lemma_proof_1_1}. The only difference from the proof of \cite[Lemma 4]{IoKe} is an interpolation step. While they used the complex interpolation, we will use a real interpolation technique as in \cite[Section 6]{KeTa}. Let $I_{\pm}=\mathbf1_{\pm}(D_1)|D_1|^{1/2}\wtilde\Gamma_0 $, where $\mathbf 1_\pm(t)=1$ for $\pm t\ge0$ and $\mathbf 1_\pm(t)=0$ for $\mp t\ge0$. 
It suffices to show that $I_{+}$ is bounded from $L^2_TL^{2_*,2}$ to $L^2_TL^2_{\widehat x_1}$ uniformly in $x_1$ since the proof for $I_{-}$ being analogous. By the $TT^*$ argument, $I_{+}\in \mathbb B(L^2_TL^{2_*,2},L^2_TL^2_{\widehat x_1})$ if $I_{+}^*I_{+}$ is bounded from $L^2_TL^{2_*,2}$ to $L^2_TL^{2^*,2}$. Hence, if we define a bilinear form $I$ by$$
I(F,G):=\iint \<I_{+}F(s,\cdot),I_{+}G(t,\cdot)\>dsdt
$$
then it suffices to show that 
\begin{align}
\label{lemma_magnetic_1_3}
|I(F,G)|\le C\norm{F}_{L^2_TL^{2_*,2}}\norm{G}_{L^2_TL^{2_*,2}}
\end{align}
uniformly in $x_1$ and $T>0$. It was shown by \cite{IoKe} that $I_{+}^*I_{+}$ is bounded on $L^2(\R^n)$ and the kernel of $I_{+}^*I_{+}$, denoted by $K_{+}(t,s,x,y)$, satisfies the dispersive estimate:
$$
|K_{+}(t,s,x,y)|\le C|t-s|^{-n/2},\quad t\neq s.
$$
We then decompose $I(F,G)$ as
$$
I(F,G)=\sum_{k\in\Z}I_k(F,G),\ I_k(F,G):=\iint_{t-2^{k+1}}^{t-2^k} \<I_{+}F(s,\cdot),I_{+}G(t,\cdot)\>dsdt. 
$$
By using the same argument as in \cite[Lemma 4.1]{KeTa}, we see that 
\begin{align*}
|I_k(F,G)|\le  C2^{-k\beta(a,b)}\norm{F}_{L^2_TL^{a'}}\norm{G}_{L^2_TL^{b'}},\quad \beta(a,b)=\frac n2-1-\frac n2(\frac1a-\frac1b)
\end{align*}
uniformly in $k\in \Z$, where $(a,b)$ satisfies one of the following conditions:
$$
\text{(i)\ \ $\frac 1a=\frac 1b=0$;\quad (ii)\ \ $\frac{n-1}{2n}\le \frac1a \le \frac12$ and $\frac 1b=\frac12$;\quad (iii)\ \ $\frac{n-1}{2n}\le \frac1b \le \frac12$ and $\frac 1a=\frac12$. }
$$
In other words, a vector valued sequence $(I_k)_{k\in \Z}$ is bounded from $L^2_TL^{a'}_x\times L^2_TL^{b'}_x$ to $\ell_{\beta(a,b)}^\infty$, where $\ell^p_s=L^p(\Z, 2^{js}dj)$ is a weighted $\ell^p$ space with the counting measure $dj$. Then \eqref{lemma_magnetic_1_3} follows from the technique by \cite[Section 6]{KeTa} based on a bilinear real interpolation. 

The estimate \eqref{lemma_smoothing_2_3} follows from the dual estimate of \eqref{lemma_smoothing_2_2} and H\"older's inequality  \eqref{Holder}. For \eqref{lemma_smoothing_2_5}, we refer to \cite{Chi}. \eqref{lemma_smoothing_2_6} and \eqref{lemma_smoothing_2_7} follow from \eqref{lemma_smoothing_2_2} and \eqref{lemma_smoothing_2_3} since $\<x\>^{-\rho}$ satisfies the condition on $w$ in Theorem \ref{theorem_1}. In order to derive \eqref{lemma_smoothing_2_8}, we observe from the formula \eqref{adjoint_1} that $\Gamma_0 $ can be brought to the form
\begin{align*}
\Gamma_0 F(t)=-\Gamma_0 ^*F(t)\pm\Gamma_0 ^{0}F(t)+\Gamma_0 ^{\mp}F(t)
\end{align*}
for $\pm t\ge0$, where 
$$
\Gamma_0 ^{0}F(t)=\int_{-T}^Te^{-i(t-s)H}F(s)ds,\quad
\Gamma_0 ^{\pm}F(t)=\int_0^{\pm T}e^{-i(t-s)H}F(s)ds. 
$$
Then the desired estimate for $\Gamma_0 ^*$ is nothing but \eqref{lemma_smoothing_2_6}; the desired estimates for $\Gamma_0 ^{0}$ and $\Gamma_0 ^{\mp}$ follow from \eqref{lemma_proof_1_1}, the dual estimate of \eqref{lemma_smoothing_2_1} with $w=\<x\>^{-\rho}$ and H\"older's inequality  \eqref{Holder}. 
\end{proof}

The following fact, proved by \cite[Theorem 1.6 and (1.23)]{BVZ} (see also \cite[Theorem 6.1 and Appendix B]{BoMi}), also  plays an important role.
%proposition
\begin{proposition}
\label{proposition_smoothing_3}
Let $n\ge3$ and $V$ satisfy Assumption \ref{assumption_A}. Then $|x|^{-1}$ is $H$-supersmooth.
\end{proposition}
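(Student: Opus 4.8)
The plan is to read the proposition off from the uniform weighted resolvent estimate of \cite{BVZ}; essentially all of the analytic work is already contained in that reference, so the task reduces to unwinding definitions and checking that the hypotheses match.

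First I would record that multiplication by $|x|^{-1}$ defines a densely defined, closed, self-adjoint operator $A=|x|^{-1}$ on $\H=L^2(\R^n)$: it is closed on its maximal domain $D(A)=\{\psi\in L^2:|x|^{-1}\psi\in L^2\}$, self-adjoint because $|x|^{-1}$ is real-valued (so $D(A^*)=D(A)$), and this domain is dense since $C_0^\infty(\R^n)\subset D(A)$ for $n\ge3$ (as $|x|^{-2}$ is locally integrable near the origin in dimension $n\ge3$). Hence the notion of $H$-supersmoothness for $|x|^{-1}$ is meaningful. Next I would unwind the definition: for $\psi\in D(A^*)$ one has $\<R_H(z)A^*\psi,A^*\psi\>=\<|x|^{-1}R_H(z)|x|^{-1}\psi,\psi\>$, so that $|x|^{-1}$ being $H$-supersmooth with bound $a$ amounts to $|\<|x|^{-1}R_H(z)|x|^{-1}\psi,\psi\>|\le\frac a2\norm{\psi}^2$ for all $z\in\C\setminus\R$ and $\psi\in D(|x|^{-1})$. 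Up to harmless constants this is the same as the finiteness of
$$
\sup_{z\in\C\setminus\R}\bignorm{|x|^{-1}R_H(z)|x|^{-1}}_{\mathbb B(L^2)},
$$
the operator on the left being a priori defined on the dense subspace $D(|x|^{-1})$ and extending by density once the bound is known.

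The key input is then \cite[Theorem 1.6 and (1.23)]{BVZ}. Assumption \ref{assumption_A} is precisely tailored to their hypotheses: the conditions $|x|V\in L^{n,\infty}$, $x\cdot\nabla V\in L^{n/2,\infty}$ control the size of the perturbation, the first form inequality in \eqref{assumption_A_1} ensures that the Friedrichs extension $H$ is well-defined with form domain $\mathcal H^1$ (as already noted in the excerpt) and coincides with the self-adjoint realization treated in \cite{BVZ}, and the two form inequalities \eqref{assumption_A_1} together are the positivity/absence-of-obstruction conditions under which \cite{BVZ} obtain the weighted resolvent bound uniformly down to zero energy. Under these hypotheses \cite{BVZ} proves $\sup_{z\in\C\setminus[0,\infty)}\||x|^{-1}(H-z)^{-1}|x|^{-1}\|_{\mathbb B(L^2)}<\infty$. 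Since $\C\setminus\R\subset\C\setminus[0,\infty)$, this gives the displayed estimate, hence the $H$-supersmoothness of $|x|^{-1}$ with some bound $a>0$.

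As for the main obstacle: there is no genuinely new difficulty in this proof, because the hard analytic point — the uniform resolvent estimate with the critical $|x|^{-1}$ weights, valid on the full cut plane and uniform all the way down to $z=0$ — is exactly \cite[Theorem 1.6]{BVZ}. The only things requiring care are bookkeeping: confirming that the potential class of Assumption \ref{assumption_A} coincides with the one in \cite{BVZ}, that the operator there is our Friedrichs extension $H$, that the estimate is stated (or extends at once) on $\C\setminus[0,\infty)\supset\C\setminus\R$, and invoking the elementary equivalence between a uniform bound on $\sup_z|\<|x|^{-1}R_H(z)|x|^{-1}\psi,\psi\>|/\norm{\psi}^2$ and the supersmoothness bound.
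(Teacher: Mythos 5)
Your proposal is correct and matches the paper exactly: the paper offers no independent proof, simply citing the uniform weighted resolvent estimate of \cite[Theorem 1.6 and (1.23)]{BVZ} (with \cite[Theorem 6.1 and Appendix B]{BoMi} as an alternative source), which is precisely the bound $\sup_{z}\norm{|x|^{-1}R_H(z)|x|^{-1}}_{\mathbb B(L^2)}<\infty$ you identify as equivalent to $H$-supersmoothness of $|x|^{-1}$. Your additional bookkeeping (closedness and self-adjointness of multiplication by $|x|^{-1}$, and the unwinding of the quadratic-form definition) is sound and only makes explicit what the paper leaves implicit.
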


We are in a position to show the main theorems. 

%proof
\begin{proof}[Proof of Theorem \ref{theorem_1}]
Let us set $V_1=|x|V$ and $V_2=|x|^{-1}$. By Sobolev's inequality \eqref{Sobolev}, 
$$
\norm{V_jf}_{L^2}\le C \norm{V_j}_{L^{n,\infty}}\norm{f}_{L^{2^*,2}}\le C \norm{V_j}_{L^{n,\infty}}\norm{\nabla f}_{L^2}
$$
and hence $D(V_j)\supset \H^1\supset D(\Delta)\cup D(H)$. Moreover,  \eqref{lemma_proof_1_1} and Proposition \ref{proposition_abstract_1} (1) show that both $V_1$ and $V_2$ are $\Delta$-smooth. On the other hand, Propositions \ref{proposition_abstract_1} and  \ref{proposition_smoothing_3} show
\begin{align}
\label{proof_1_1}
\norm{V_2e^{-itH}\psi}_{L^2_TL^2}\le C \norm{\psi}_{L^2}
\end{align}
uniformly in $T>0$. Let $\B$ the completion of $C_0^\infty$ with respect to the norm $\norm{w(|x|)^{-1}|D|^{-1/2}f}_{L^2}$. By virtue of \eqref{lemma_smoothing_2_1}, \eqref{lemma_smoothing_2_3} and \eqref{proof_1_1}, one can use Proposition \ref{proposition_abstract_2} with $H_0=-\Delta$, $H=-\Delta+V$ and this $\B$ to obtain
$$
|\<\<e^{-itH}\psi,G\>\>_T|\le C \norm{w}_{L^2(\R)}\norm{\psi}_{L^2}\norm{G}_{\B}
$$
for all $\psi\in L^2$ and simple function $G:[-T,T]\to \S$ uniformly in $T>0$. Then the desired estimate follows from density and duality arguments. %Since simple functions $G:[-T,T]\to \S$ are dense in $\B$, we have, uniformly in $T>0$, $$\norm{w(|x|)|D|^{1/2}\psi}_{L^2_TL^2}\le C \norm{w}_{L^2(\R)}\norm{\psi}_{L^2}$$ by the duality and the result follows by letting $T\to\infty$. 
\end{proof}

%proof
\begin{proof}[Proof of Theorem \ref{theorem_2}]
We use the same decomposition $V=V_1V_2$ as above. Since $V_2$ is $H$-supersmooth, we learn by  Proposition \ref{proposition_abstract_1} and a remark after Lemma \ref{lemma_abstract_2} that 
\begin{align}
\label{proof_2_1}
\norm{V_2\Gamma_HV_2\wtilde G}_{L^2_TL^2}\le C\norm{\wtilde G}_{L^2_TL^2}
\end{align}
for all simple function $\wtilde G:\R\to D(V_2)$ with the constant $C$ independent of $T$ and $\wtilde G$. By virtue of \eqref{lemma_proof_1_1}--\eqref{lemma_proof_1_4}, \eqref{lemma_smoothing_2_1} with $w=\<x\>^{-\rho}$, \eqref{lemma_smoothing_2_5}--\eqref{lemma_smoothing_2_8} with $v\in \{V_1,V_2\}$, \eqref{proof_1_1} and \eqref{proof_2_1}, we can use Proposition \ref{proposition_abstract_2} with $\A,\B\in \{\dot\H^{-1/2,\rho},L^{2_*,2}\}$ to obtain 
\begin{align*}
|\<\<e^{-itH}\psi,G\>\>_T|\le C \norm{\psi}_{L^2}\norm{G}_{L^2_T\B},\quad
|\<\<\Gamma_HF,G\>\>_T|\le C \norm{F}_{L^2_T\A}\norm{G}_{L^2_T\B}
\end{align*}
uniformly in $T>0$, $\psi\in L^2$ and simple functions $F,G:\R\to\S$. Then the assertion follows from density of simple functions $F:\R\to\S$ in $L^2_T\A$ and $L^2_T\B$ and the formula \eqref{solution}. 
\end{proof}
%Note that since $A=w(|x|)|D|^{1/2}$ is densely defined closed operator on $L^2$, so is its adjoint $A^*=|D|^{1/2}w(|x|)$. Hence \eqref{theorem_1_2} implies that $w(|x|)|D|^{1/2}\Gamma_H|D|^{1/2}w(|x|)$ extends to a bounded operator on $L^2(\R^{1+n})$. 
%%%%%%%%%% Bibliography %%%%%%%%%%%%%%%%%%%

\end{document}